\begin{document}

\theoremstyle{theorem}
\newtheorem{Lemma}{Lemma}[section]
\newtheorem{Proposition}{Proposition}[section]
\newtheorem{Corollary}{Corollary}[section]
\newtheorem{Theorem}{Theorem}[section]
\newtheorem{Condition}{Condition}[section]
\newtheorem{Integrator}{Integrator}

\theoremstyle{definition}
\newtheorem{Definition}{Definition}[section]
\newtheorem{Remark}{Remark}[section]
\newtheorem{Example}{Example}[section]

\title{Structure preserving Stochastic Impulse Methods for stiff Langevin systems with a uniform global error of order 1 or 1/2 on position}

\author{Molei Tao\footnotemark[2] \footnotemark[4], Houman Owhadi\footnotemark[2] \footnotemark[3], Jerrold E. Marsden\footnotemark[2] \footnotemark[3]}

\renewcommand{\thefootnote}{\fnsymbol{footnote}}
\footnotetext[2]{Control \& Dynamical Systems, MC 107-81,}
\footnotetext[3]{Applied \& Computational Mathematics, MC 217-50, California Institute of Technology, Pasadena, CA 91125, USA}
\footnotetext[4]{Corresponding author; Email: mtao@caltech.edu}
\renewcommand{\thefootnote}{\arabic{footnote}}

\maketitle

\begin{abstract}
Impulse methods are generalized to a family of integrators for Langevin systems with quadratic stiff potentials and arbitrary soft potentials.
Uniform error bounds (independent from stiff parameters) are obtained on integrated positions allowing for coarse integration steps. The resulting integrators are explicit and structure preserving (quasi-symplectic for Langevin systems).
\end{abstract}

\section{Introduction}
\paragraph{Results:} This paper generalizes the impulse methods for stiff Hamiltonian systems \cite{Grubmuller:91,Tuckerman:92} to stiff stochastic Langevin systems. In the stochastic setting these integrators are quasi-symplectic as defined in \cite{MiTr2003}. Unform error bounds are obtained for both stochastic and deterministic settings.

More precisely, this paper is concerned with the numerical integration the following stiff SDEs:
\[
    \left\{ \begin {array} {rcl}
    M dq &=& p dt \\
    dp &=& -\nabla V(q)dt - \epsilon^{-1} K q dt - cpdt + \sigma dW
    \end {array} \right.
\]
which describes a stochastic mechanical system with a potential being sum of slow $V(q)$ and fast $\frac{1}{2}\epsilon^{-1}q^T Kq$, and the momentum being perturbed by noise and attenuated by friction.

When noise and friction are both present (i.e. $c\neq 0$, $\sigma\neq 0$), a 1st-order member of the proposed Stochastic Impulse Methods (SIMs) family will integrate position $q$ with a global error uniformly bounded by $CH^{1/2}$, where $H$ is the integration timestep and $C$ is a constant independent from $\epsilon^{-1}$ (provided that the solution remains bounded). The integrator is also shown to be quasi-symplectic. When noise and friction are absent (i.e. $c=0$, $\sigma=0$), the deterministic (and symplectic) version of the 1st-order SIM gives a uniform 1st-order global error on $q$, if again the solution is bounded. The error bound on momentum $p$, however, is not uniform here. Recall that increased accuracy and stability
was one of motivations supporting the development of mollified impulse methods \cite{Skeel:99,Sanz-Serna:08}.

\emph{Dynamical systems with multiple time scales pose a major problem in simulations because the small time steps required for stable integration of the fast motions lead to large numbers of time steps required for the observation of slow degrees of freedom} \cite{Hairer:04}. As seen from the error bounds, in the case of quadratic fast potential, SIMs provide a possibility of accurate integration with a choice of timestep not restricted by the stiffness $\epsilon^{-1}$, as long as position is the quantity of interest. In these cases, a large timestep can be adopted.

Also, SIMs are symplectic \cite{Hairer:04} and in fact variational \cite{MaWe:01} in the case of no noise no friction, and are quasi-symplectic \cite{MiTr2003} in the case of full Langevin. As a result of the preservation of structure, properties such as near preservation of energy or of associated Boltzmann-Gibbs invariant measure, as well as conservation of momentum maps could be obtained, which significantly benefit long time numerical integrations.

\paragraph{Related work:}
Many elegant methods have been proposed in the area of stiff Hamiltonian/Langevin integration with different focus and perspective.

Impulse methods, as well as other members of the exponential integrator family \cite{ExponentialIntegrators}, including Mollified Impulse Methods \cite{Skeel:99,Sanz-Serna:08} and Gautschi-type integrators \cite{Gautschi} are prevailing symplectic integrators for stiff Hamiltonian systems. They are however not directly extendable to stiff Langevin systems if integration with a large step is desired.  The general GLA \cite{BoOw:09} approach (see also \cite{SkIz02} for an extension of impulse methods  non-stiff Langevin systems) of constructing Langevin integrator from a symplectic scheme  by composing an Ornstein-Uhlenbeck flow with the symplectic integrator will not yield a uniform error bound in the case of stiff Langevins.  It is worth mentioning that impulse methods are not limited to quadratic stiff potentials (provided that the flow of the stiff part of the Hamiltonian is given).

The  implicit method approach for integrating stiff equations include the  LIN algorithm \cite{ZhSc:93} for stiff Langevin systems. However, it has been observed that \emph{Implicit methods in general fail to capture the effective dynamics of the slow time scale because they cannot correctly capture non-Dirac invariant distributions} \cite{LiAbE:08}. Moreover implicit methods are generally slower than explicit methods, provided they use comparable timesteps.

Implicit and explicit approaches were combined in a variational integration framework by defining the discrete Lagrangian via trapezoidal approximation of the soft potential and midpoint approximation of the stiff potential. The resulting IMEX for stiff Hamiltonian systems \cite{Stern:09} is explicit in the case of quadratic fast potential. Similar as the case of impulse methods, there is no easy way to extend IMEX to stiff Langevin systems.

The Hamilton-Jacobi derived homogenization method for multiscale Hamiltonian systems \cite{LeBris:07} enables the usage of a large timestep
for deterministic systems but can not directly be extended to stiff Langevin systems using the GLA approach \cite{BoOw:09}.

Multiscale methods that integrate the slow dynamics by averaging the effective contribution of the fast dynamics have been applied
to stiff Langevin systems. These
include Heterogeneous Multiscale Methods (HMM) \cite{MR2314852, MR2164093, MR2069938, CaSer08, Ariel:08}, equation free methods \cite{MR2041455, GiKeKup06, KevGio09}, and FLow AVeraging integratORS (FLAVORS) \cite{FLAVOR09}. We observe that these methods use mesoscopic timesteps, which are (usually) one or two orders of magnitude smaller than the large steps employed in SIMs. These methods also assume a separation of timescales, and therefore will not work for generic stiff Langevin systems that are not necessarily multiscale. In addition, based on averaging instantaneous drifts, both Heterogeneous Multiscale Methods and equation free methods (in their original form) require an identification of slow variables in general nonlinear cases (with exceptions such as in \cite{Seamless09}). Reversible and symmetric methods in these frameworks have been proposed \cite{MR1843642, Ariel:09, SerArTs09}. FLAVORS  are based on averaging instantaneous flows and do not require explicit identification of slow variables, and are symplectic (quasi-symplectic).

\section{Stochastic Impulse Methods}
Consider numerical integration of the following multiscale Langevin SDEs
\begin{equation}
    \left\{ \begin {array} {rcl}
    M dq &=& p dt \\
    dp &=& -\nabla V(q)dt - \epsilon^{-1} K q dt - cpdt + \sigma dW
    \end {array} \right.
    \label{dynamics}
\end{equation}
where $0 < \epsilon\ll 1$, $q \in \mathbb{R}^d$, $p \in \mathbb{R}^d$, $K$ is positive definite $d\times d$ matrix, $c$ and $\sigma$ are positive semi-definite $d\times d$ matrices, respectively indicating viscous damping coefficients and amplitudes of noises. We restrict ourselves to Euclidean phase spaces, although the method is readily generalizable to manifolds. In addition, we require that matrices $K$ and $c$ commute; a special case satisfying this requirement is $c$ being a scalar.

In the case of no noise no friction ($c=0$ and $\sigma=0$), the system degenerates to a deterministic mechanical system with Hamiltonian $H(q,p)=\frac{1}{2}p^T M^{-1} p+V(q)+\epsilon^{-1} \frac{1}{2} q^T K q$.

Also, the method as well as the uniform convergence theorem works for a more general open system:
\begin{equation}
    \left\{ \begin {array} {rcl}
    M dq &=& p dt \\
    dp &=& F(q)dt - \epsilon^{-1} K q dt - cpdt + \sigma dW
    \end {array} \right.
    \label{dynamics}
\end{equation}
but we stick to \eqref{dynamics} for simplicity in descriptions.

\smallskip

Denote by $\phi^f(\tau):\left(q^f(t),p^f(t)\right) \mapsto \left(q^f(t+\tau),p^f(t+\tau)\right)$ and $\phi^s(\tau):\left(q^s(t),p^s(t)\right) \mapsto \left(q^s(t+\tau),p^s(t+\tau)\right)$ respectively the $\tau$-flow maps of the autonomous SDE systems
\begin{equation}
    \left\{ \begin {array} {rcl}
        M dq^f &=& p^f dt \\
        dp^f   &=& - \epsilon^{-1} Kq^f dt - cp^f dt + \sigma dW
    \end {array} \right.
    \label{system1}
\end{equation}
and
\begin{equation}
    \left\{ \begin {array} {rcl}
        M dq^s &=& 0 \\
        dp^s   &=& -\nabla V(q^s) dt
    \end {array} \right.
    \label{system2}
\end{equation}
Since the first system is a linear SDE and the second is a free drift, flows of both can be obtained exactly.

Then Stochastic Impulse Methods(SIMs) are defined via compositions of $\phi^f$ and $\phi^s$. Here are several examples of SIMs with a timestep $H$:

\begin{Integrator}
\textbf{1st order SIM in the $c=0$, $\sigma=0$ case}, is given by the one step update of $\phi^s(H)\circ\phi^f(H)$:
\begin{eqnarray*}
    &\left\{ \begin{array}{rcl}
        q_{k'} &=& A_{11}(H)q_k+A_{12}(H)p_k \\
        p_{k'} &=& A_{21}(H)q_k+A_{22}(H)p_k \\
        q_{k+1} &=& q_{k'} \\
        p_{k+1} &=& p_{k'}-H \nabla V(q_{k'}) \\
    \end{array} \right. \\
    &\text{where }
        \left[ \begin{array}{cc}
        A_{11}(H) & A_{12}(H) \\
        A_{21}(H) & A_{22}(H)
        \end{array} \right]
        = \exp
        \left[ \begin{array}{cc}
        0 & M^{-1}H \\
        -\epsilon^{-1}KH & 0
        \end{array} \right] , \\
        &\begin{cases} q_0 = q(0) \\
            p_0 = p(0) \end{cases}
\end{eqnarray*}
\label{1stSIM_Hamiltonian}
\end{Integrator}

\begin{Remark}
The other 1st order SIM, as the above's dual, can be obtained via the one step update $\phi^f(H)\circ\phi^s(H)$. Both these 1st order composition schemes are well known as the Lie-Trotter splitting \cite{Trotter:59}.
\end{Remark}

\begin{Integrator}
\textbf{1st order SIM in the full Langevin case}, given by the same one step update $\phi^s(H)\circ\phi^f(H)$:
\begin{eqnarray*}
    &\left\{ \begin{array}{rcl}
        q_{k'} &=& B_{11}(H)q_k+B_{12}(H)p_k+Rq_k(H) \\
        p_{k'} &=& B_{21}(H)q_k+B_{22}(H)p_k+Rp_k(H) \\
        q_{k+1} &=& q_{k'} \\
        p_{k+1} &=& p_{k'}-H \nabla V(q_{k'}) \\
        \begin{bmatrix} Rq_k(H) \\ Rp_k(H) \end{bmatrix} &\sim& \mathcal{N}(\begin{bmatrix} 0 \\ 0 \end{bmatrix},\begin{bmatrix} \Sigma^2_{11}(H)) & \Sigma^2_{12}(H)) \\ \Sigma^2_{21}(H)) & \Sigma^2_{22}(H)) \end{bmatrix}),\text{i.i.d. normal distributed}
    \end{array} \right. \\
    &\text{where }
        \left[ \begin{array}{cc}
        B_{11}(H) & B_{12}(H) \\
        B_{21}(H) & B_{22}(H)
        \end{array} \right]
        = \exp
        \left[ \begin{array}{cc}
        0 & M^{-1}H \\
        -\epsilon^{-1}KH & -cH
        \end{array} \right] , \\
        &\begin{cases} q_0 = q(0) \\
            p_0 = p(0) \end{cases} , \\
        &\begin{cases}
        \Sigma^2_{11}(H) = \int_{s=0}^H \left( B_{12}(H-s) \sigma \sigma^T B_{12}^T(H-s)\right) ds \\
        \Sigma^2_{12}(H) = \int_{s=0}^H \left( B_{12}(H-s) \sigma \sigma^T B_{22}^T(H-s)\right) ds \\
        \Sigma^2_{21}(H) = \int_{s=0}^H \left( B_{22}(H-s) \sigma \sigma^T B_{12}^T(H-s)\right) ds \\
        \Sigma^2_{22}(H) = \int_{s=0}^H \left( B_{22}(H-s) \sigma \sigma^T B_{22}^T(H-s)\right) ds
        \end{cases}
\end{eqnarray*}
\label{1stSIM_Langevin}
\end{Integrator}

\begin{Remark}
    $\begin{bmatrix} Rq_k(H) \\ Rp_k(H) \end{bmatrix}$ indicates the value of $\int_{s=0}^{H} B(H-s) \begin{bmatrix} 0 \\ \sigma dW_s \end{bmatrix}$ and hence is a vectorial normal random variable with zero mean and covariance of $\begin{bmatrix} \Sigma_{11}^2(H) & \Sigma_{12}^2(H) \\ \Sigma_{21}^2(H) & \Sigma_{22}^2(H) \end{bmatrix}$.
\end{Remark}

\begin{Integrator}
\textbf{2nd order SIM in the full Langevin case}, given by the one step update $\phi^s(H/2)\circ\phi^f(H)\circ\phi^s(H/2)$:
\begin{eqnarray*}
    \left\{ \begin{array}{rcl}
        q_{k'} &=& q_k \\
        p_{k'} &=& p_k-\frac{H}{2}\nabla V(q_k) \\
        q_{k''} &=& B_{11}(H)q_{k'}+B_{12}(H)p_{k'}+Rq_k(H) \\
        p_{k''} &=& B_{21}(H)q_{k'}+B_{22}(H)p_{k'}+Rp_k(H) \\
        q_{k+1} &=& q_{k''} \\
        p_{k+1} &=& p_{k''}-\frac{H}{2}\nabla V(q_{k''}) \\
    \end{array} \right.
\end{eqnarray*}
\end{Integrator}

\begin{Remark}
This uses the 2nd order composition scheme known as the Strang or Marchuk splitting \cite{Strang:68, Marchuk:68}. When no noise or friction, i.e. $c=0$ and $\Sigma=0$, the resulting integrator degenerates to the prevailing Verlet-I/r-RESPA impulse method \cite{Grubmuller:91,Tuckerman:92}.
\end{Remark}

\begin{Remark}
Higher order SIMs can be obtained systematically since generic way for constructing higher order splitting/composition schemes exists \cite{Hairer:04}. For instance a 4th order SIM is given by $\phi^s(cH/2) \circ \phi^f(cH) \circ \phi^s((1-c)H/2) \circ \phi^f((1-2c)H) \circ \phi^s((1-c)H/2) \circ \phi^f(cH) \circ \phi^s(cH/2)$ where $c=\frac{1}{2-2^{1/3}}$ \cite{Neri:88}.
\end{Remark}

\section{Properties}
\subsection{Symplecticity}
In the case of $c=0$ and $\sigma=0$, since $\phi^s$ and $\phi^f$ are the exact flows of Hamiltonian systems, they are symplectic. Therefore SIMs, as compositions of the two, are symplectic.

In fact, SIMs here are not only symplectic but variational, in the sense that their equations of motion are obtained as critical point of a globally defined action, which is the integral of a discrete Lagrangian. Since SIMs are based on splitting schemes, and the original system is split to two Hamiltonian systems, backward error analysis can be done via Poisson brackets \cite{Hairer:04}, resulting in a global non-degenerate Hamiltonian that is exactly preserved. Then Legendre transformation gives the discrete Lagrangian and hence the variational structure.

When noise and friction are present, SIMs are quasi-symplectic for RL1 and RL2 in \cite{MiTr2003} can be easily checked to be true, i.e. they degenerate to symplectic methods if friction is set equal to zero and the Jacobian of the flow map is independent of $(q,p)$.

If in addition $c$ is isotropic, then SIMs are conformally symplectic, i.e. they preserve the precise symplectic area change associated to the flow of inertial Langevin processes \cite{McPe2001}.

\subsection{Uniform Convergence}
In the case of $c=0$ and $\sigma=0$, convergence of SIMs is guaranteed by the general construction of splitting schemes. In the full Langevin setting, analogous convergence results for the same splitting schemes can be easily obtained using generators of SDEs. By this approach, however, the error bound will contain the scaling factor $\epsilon^{-1}$ and therefore restrain the timestep from being large. We instead seek for uniform convergence results, i.e. error bounds that don't depend on $\omega$. It turns out such a uniform bound holds only for the position $q$ but not the momentum $p$.

\begin{Condition}
    We will prove a uniform bound on the scaled energy norm of the global error of Integrator \ref{1stSIM_Langevin} if the following conditions hold:
    \begin{enumerate}
        \item
            Matrices $c$ and $K$ commute. A special case could be $c$ being a scalar.
        \item
            $\lim_{\epsilon\rightarrow 0} \sqrt{\epsilon}\| c \|_2 \leq C$ for some constant $C$ independent of $\epsilon$, i.e. $c \leq O(\epsilon^{-1/2})$.
        \item
            $\sigma$ is independent of $\epsilon^{-1}$, in the sense that $\lim_{\epsilon\rightarrow 0} \epsilon^p \| \sigma \|_2 = 0$ for any $p>0$.
        \item
            In the integration domain of interest $\nabla V(\cdot)$ is bounded and Lipschitz continuous with coefficient $L$, i.e. $\| \nabla V(a)-\nabla V(b) \|_2 \leq L \| a-b \|_2$.
        \item
            Denote by $x(T)=(q(T),p(T))$ the exact solution to \eqref{dynamics}, and $x_T=(q_T,p_T)$ the discrete numerical trajectory given by Integrator \ref{1stSIM_Langevin}, then $\mathbb{E}\|x(T)\|_2^2 \leq C$ and $\mathbb{E}\|x_T\|_2^2 \leq C$ for some constant $C$ independent of $\epsilon^{-1}$ but dependent on initial condition $\mathbb{E}\| \begin{bmatrix} q_0 \\ p_0 \end{bmatrix} \|_2^2$, amplitude of noise $\sigma$ and friction $c$.

            Note that this condition usually holds due to preservation of Boltzmann-Gibbs invariant measure, whose parameter of temperature doesn't depend on $\epsilon^{-1}$ since noise is weak, and whose energy function is usually dominated by the positive definite fast potential (implying closed energy level sets).
    \end{enumerate}
    \label{Condition1}
\end{Condition}

\begin{Theorem}
    If Condition \ref{Condition1} holds, the $1^{st}$ order SIM (Integrator \ref{1stSIM_Langevin}) for multiscale Langevin system \eqref{dynamics} ($c\neq 0$, $\sigma\neq 0$) has in mean square sense a uniform global error of $O(H^{1/2})$ in $q$ and a non-uniform global error of $\epsilon^{-1/2}O(H^{1/2})$ in $p$, given a fixed total simulation time $T=NH$:
    \begin{eqnarray}
        (\mathbb{E} \| q(T)-q_T \|_2^2)^{1/2} &\leq& C H^{1/2} \\
        (\mathbb{E} \| p(T)-p_T \|_2^2)^{1/2} &\leq& \epsilon^{-1/2} \| \sqrt{K} \|_2 C H^{1/2}
    \end{eqnarray}
    where $q(T),p(T)$ is the exact solution and $q_T,p_T$ is the numerical solution; $C$ is a positive constant independent of $\epsilon^{-1}$ but dependent on simulation time $T$, scaleless elasticity matrix $K$, scaled damping coefficient $\sqrt{\epsilon}c$ ($O(1)$), amplitude of noise $\sigma$, slow potential energy $V(\cdot)$, and initial condition $\mathbb{E}\| \begin{bmatrix} q_0 \\ p_0 \end{bmatrix} \|_2^2$.
\label{UniformConvergence}
\end{Theorem}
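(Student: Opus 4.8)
\emph{Proof outline.} The idea is to measure the error not in the Euclidean norm but in a scaled energy norm adapted to the fast subsystem, in which the two substep flows behave well, so that the splitting inherits a stability estimate with constants free of $\epsilon^{-1}$; what then remains is a uniform one‑step consistency estimate. After the linear change of variables $q\mapsto M^{1/2}q$ (which keeps $K,c$ symmetric positive (semi)definite, preserves $[c,K]=0$, and leaves the hypotheses of Condition~\ref{Condition1} intact) we may assume $M=I$. Write $B(\tau)=\exp\!\left(\tau\left[\begin{smallmatrix}0&I\\-\epsilon^{-1}K&-c\end{smallmatrix}\right]\right)$ for the fast propagator of Integrator~\ref{1stSIM_Langevin}, with blocks $B_{ij}(\tau)$, and put $\|(q,p)\|_{*}^{2}:=q^{T}Kq+\epsilon\,p^{T}p$, i.e.\ $\epsilon$ times the mechanical energy of \eqref{system1}. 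Two elementary facts drive the argument. (i) $\phi^f(H)$ is a $\|\cdot\|_{*}$‑contraction: its deterministic part dissipates this energy since $c\succeq0$ and $[c,K]=0$, while its stochastic part is additive and hence cancels when two trajectories are subtracted --- so this bound holds pathwise. (ii) $\phi^s(H)$ obeys $\|\phi^s(H)x-\phi^s(H)y\|_{*}\le(1+C\sqrt{\epsilon}\,H)\,\|x-y\|_{*}$: its only nonlinearity is the kick $-H\nabla V(q)$, which is $HL$‑Lipschitz in $q$, and in $\|\cdot\|_{*}$ the position carries the weight $\sqrt K$ with \emph{no} factor $\epsilon^{-1}$ while the momentum carries weight $\sqrt\epsilon$, so $HL\|\delta q\|$ contributes only at order $\sqrt\epsilon\,H$. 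Composing, $\Psi_H:=\phi^s(H)\circ\phi^f(H)$ is $(1+C\sqrt\epsilon\,H)$‑Lipschitz in $\|\cdot\|_{*}$, hence $\Psi_H^{N-k}$ is $e^{CT}$‑Lipschitz whenever $(N-k)H\le T$, uniformly in $\epsilon$; this is where the (quasi‑)symplectic, energy‑dissipating structure of Section~3 is used.

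Next I would write the global error as a telescoping sum of propagated one‑step errors. Unfolding both the exact mild (variation‑of‑constants) solution and the numerical iteration over $[0,T]$, and realising the Gaussian increments $(Rq_k(H),Rp_k(H))$ of Integrator~\ref{1stSIM_Langevin} as $\int_{t_k}^{t_{k+1}}B(t_{k+1}-s)\binom{0}{\sigma\,dW_s}$ with the same Brownian motion as in \eqref{dynamics}, each $\phi^f$‑substep realises \emph{exactly} the flow of \eqref{system1}; consequently all stochastic convolutions telescope and cancel against the one in the exact solution, leaving, with $t_k=kH$ and $x(t_0)=x_0$,
\[
x(T)-x_T=\sum_{k=1}^{N}\big[\Psi_H^{N-k}(x(t_k))-\Psi_H^{N-k}(\Psi_H(x(t_{k-1})))\big].
\]
By Minkowski's inequality and the uniform Lipschitz bound of the previous step, $\big(\mathbb E\|x(T)-x_T\|_{*}^{2}\big)^{1/2}\le e^{CT}\sum_{k=1}^{N}\big(\mathbb E\|\ell_k\|_{*}^{2}\big)^{1/2}$, where the one‑step error $\ell_k:=x(t_k)-\Psi_H(x(t_{k-1}))$ equals, by construction, only the mismatch between the continuously propagated soft force $\int_{0}^{H}B(H-s)\binom{0}{-\nabla V(q(t_{k-1}+s))}\,ds$ and the endpoint kick $\binom{0}{-H\nabla V(q')}$.

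The crux is the uniform one‑step bound $\big(\mathbb E\|\ell_k\|_{*}^{2}\big)^{1/2}\le C H^{3/2}$ with $C$ independent of $\epsilon^{-1}$. The \emph{position} component of $\ell_k$ is $O(H^{2})$ in $\|\cdot\|_{*}$: indeed $\|B_{22}(\tau)\|_{2}\le1$ and hence $\|B_{12}(\tau)\|_{2}=\|\int_0^\tau B_{22}(u)\,du\|_2\le\tau$, both read off by applying the energy identity for \eqref{system1} with $\sigma=0$ to each eigenmode of $K$ (and of $c$). The \emph{momentum} component is the delicate one. The bound $\|B_{22}\|_2\le1$ gives only $\|[\ell_k]_p\|\le 2H\|\nabla V\|_\infty$, i.e.\ a contribution of order $\sqrt\epsilon\,H$ to $\|\ell_k\|_{*}$, which is $\le CH^{3/2}$ exactly when $\epsilon\le H$. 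In the complementary regime $\epsilon>H$ --- a step short relative to the fast period $\sim\sqrt\epsilon$, so that $\epsilon^{-1}\|K\|H^{2}$ and $\|c\|H$ are bounded --- a controlled expansion of $B(\tau)$ about $I$, together with $\|q(t_{k-1}+s)-q(t_{k-1})\|=O(H)$ on the step (which uses the uniform second‑moment bound of Condition~\ref{Condition1}(5)), yields $\|[\ell_k]_p\|=O(H^{2})$, hence again a contribution $\le CH^{2}\le CH^{3/2}$. Matching the two regimes gives the uniform $O(H^{3/2})$, whence $\big(\mathbb E\|x(T)-x_T\|_{*}^{2}\big)^{1/2}\le e^{CT}(T/H)\,CH^{3/2}=O(H^{1/2})$; reading off the two blocks of $\|\cdot\|_{*}$ --- the $q$‑block with its $\epsilon$‑free weight $K$, the $p$‑block with its weight $\epsilon$ --- gives exactly $\big(\mathbb E\|q(T)-q_T\|_2^{2}\big)^{1/2}\le CH^{1/2}$ and $\big(\mathbb E\|p(T)-p_T\|_2^{2}\big)^{1/2}\le\epsilon^{-1/2}\|\sqrt K\|_2\,CH^{1/2}$.

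I expect the main obstacle to be precisely this one‑step estimate in the large‑step regime $\epsilon\le H$: there the fast oscillator completes many periods within a single step, no Taylor expansion of $B(\tau)$ is available, and the endpoint kick genuinely differs from the propagated continuous force by an $O(H)$ amount in the momentum; it is only the $\sqrt\epsilon$‑weighting built into $\|\cdot\|_{*}$ that collapses this to $O(H^{3/2})$, and that same weighting is exactly what forbids a uniform bound on $p$. Keeping every constant free of $\epsilon^{-1}$ along the way --- in the Lipschitz estimate for $\Psi_H$, in the propagator bounds for $B_{12},B_{22}$, and in the moment bounds along exact and numerical trajectories --- is what the hypotheses of Condition~\ref{Condition1} are for: items (1)--(3) preserve the dissipative energy structure of \eqref{system1}, item (4) controls the one‑step error, and item (5) closes the mean‑square estimates.
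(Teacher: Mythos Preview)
Your strategy coincides with the paper's: work in a scaled energy norm in which the fast flow is nonexpansive, establish a uniform Lipschitz bound for the one-step propagation in that norm, prove a uniform $O(H^{3/2})$ local error, and close by Gronwall. The paper uses $\|(q,p)\|_E^2=\|q\|_2^2+\epsilon\,p^TK^{-1}p$ rather than your $\|(q,p)\|_*^2=q^TKq+\epsilon\,\|p\|_2^2$; both are contracted by $B(\tau)$ and are equivalent with constants depending only on $K$. The one structural difference is that the paper inserts an auxiliary \emph{bridge dynamics}---the exact SDE \eqref{dynamics} with $\nabla V(q)$ frozen at the initial point of the step---and splits the local error into an exact-versus-bridge piece ($O(H^{3/2})$) and a bridge-versus-SIM piece ($O(H^2)$), then propagates the accumulated error through the bridge flow, which is $(1+HL)$-Lipschitz in $\|\cdot\|_E$. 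Your direct telescoping through $\Psi_H$ with its $(1+C\sqrt\epsilon\,H)$-Lipschitz constant is a legitimate alternative and yields a sharper propagation factor; the two routes lead to the same Gronwall recursion.

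Your regime split for the momentum block of $\ell_k$ is the one place that needs repair. In the regime $\epsilon>H$ the claim $\|[\ell_k]_p\|_2=O(H^2)$ does not follow from the argument you sketch: the only propagator estimate available is $\sqrt\epsilon\,\|B_{22}(\tau)-I\|_2\le C\tau$ (with $C$ depending on $\sqrt\epsilon\,c$), so $\|B_{22}(\tau)-I\|_2\le C\tau/\sqrt\epsilon$, which in that regime gives only $\int_0^H\|B_{22}-I\|\,ds=O(H^{3/2})$; likewise $\|q'-q(t_{k-1})\|_2$ is only $O(\sqrt H)$ in rms there (from $\|B_{11}(H)-I\|_2\le CH/\sqrt\epsilon$), not $O(H)$. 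The split is in fact unnecessary: writing
\[
[\ell_k]_p=-\int_0^H\bigl(B_{22}(H-s)-I\bigr)\nabla V\bigl(q(t_{k-1}+s)\bigr)\,ds+\int_0^H\bigl(\nabla V(q')-\nabla V\bigl(q(t_{k-1}+s)\bigr)\bigr)\,ds
\]
and applying $\sqrt\epsilon\,\|B_{22}(s)-I\|_2\le Cs$ to the first integral and $\mathbb{E}\|q'-q(t_{k-1}+s)\|_2^2\le C(H^2/\epsilon+H^2)$ to the second gives $\sqrt\epsilon\,(\mathbb{E}\|[\ell_k]_p\|_2^2)^{1/2}=O(H^2)$ uniformly in $\epsilon$, with no case distinction. (The paper itself settles for a local $O(H^{3/2})$ in its exact-versus-bridge lemma by overbounding $\|q(s)-q_0\|_2$ through $\|x(s)-x_0\|_2$; the sharper $O(H^2)$ is available but not needed for the stated $O(H^{1/2})$ conclusion.)
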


\begin{proof}
We refer to the appendix for the proof.
\end{proof}

\begin{Remark}
By looking at the proof, one can be assured that all convergence results of SIMs apply to situations where the deterministic system is in a more general form of $M \frac{d^2}{dt^2}q=-\epsilon^{-1}Kq+F(q)$, where $F(q)$ doesn't have to be $-\nabla V(q)$.
\end{Remark}

In the special case of Hamiltonian system, the same integrator gains 1/2 more order of accuracies.

\begin{Condition}
    We will prove a uniform bound on the scaled energy norm of the global error of Integrator \ref{1stSIM_Hamiltonian} if the following conditions hold:
    \begin{enumerate}
        \item
            In the integration domain of interest $\nabla V(\cdot)$ is bounded and Lipschitz continuous with coefficient $L$, i.e. $\| \nabla V(a)-\nabla V(b) \|_2 \leq L \| a-b \|_2$.
        \item
            Denote by $x(T)=(q(T),p(T))$ the exact solution to \eqref{dynamics} with $c=0$ and $\sigma=0$, and $x_T=(q_T,p_T)$ the discrete numerical trajectory given by Integrator \ref{1stSIM_Hamiltonian}, then $\|x(T)\|_2^2 \leq C$ and $\|x_T\|_2^2 \leq C$ for some constant $C$ independent of $\epsilon^{-1}$ but dependent on initial condition $\| \begin{bmatrix} q_0 \\ p_0 \end{bmatrix} \|_2^2$.

            Note that this condition usually holds due to preservation of energy, which is usually dominated by the positive definite fast potential (implying closed energy level sets).
    \end{enumerate}
    \label{Condition2}
\end{Condition}

\begin{Theorem}
    If Condition \ref{Condition2} holds, the $1^{st}$ order SIM (Integrator \ref{1stSIM_Hamiltonian}) for multiscale Hamiltonian system (\eqref{dynamics} with $c=0$, $\sigma=0$) has a uniform global error of $O(H)$ in $q$ and a non-uniform global error of $\epsilon^{-1/2}$O(H) in $p$, given a fixed total simulation time $T=NH$:
    \begin{eqnarray}
        \| q(T)-q_T \|_2 &\leq& C H \\
        \| p(T)-p_T \|_2 &\leq& \epsilon^{-1/2} \| \sqrt{K} \|_2 C H
    \end{eqnarray}
    where $q(T),p(T)$ is the exact solution and $q_T,p_T$ is the numerical solution; $C$ is a positive constant independent of $\epsilon^{-1}$ but dependent on simulation time $T$, scaleless elasticity matrix $K$, slow potential energy $V(\cdot)$ and initial condition $\| \begin{bmatrix} q_0 \\ p_0 \end{bmatrix} \|_2$.
\end{Theorem}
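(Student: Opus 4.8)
The plan is to run the usual local-error / discrete-Gronwall scheme, but to measure the global error in the \emph{energy norm}
\[
  \|(q,p)\|_E^2 \;:=\; \epsilon^{-1}\,q^{T}Kq \;+\; p^{T}M^{-1}p ,
\]
rather than in $\|\cdot\|_2$. The reason is that $\tfrac12\|\cdot\|_E^2$ is exactly the conserved Hamiltonian of the fast linear subsystem \eqref{system1} with $c=\sigma=0$, so its flow $\phi^f$ --- equivalently the matrix $A(H)$ appearing in Integrator~\ref{1stSIM_Hamiltonian} --- is an \emph{isometry} for $\|\cdot\|_E$, whereas a Gronwall estimate written in $\|\cdot\|_2$ would feed the stiff off-diagonal block $A_{21}=O(\epsilon^{-1/2})$ back into the recursion and fail to be uniform. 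The kick $\phi^s(H)$ changes only $p$, by $-H\nabla V(q)$; since $\nabla V$ is $L$-Lipschitz (Condition~\ref{Condition2}) and $\|q\|_2\le(\epsilon/\lambda_{\min}(K))^{1/2}\|(q,p)\|_E$, the map $\phi^s(H)$ is Lipschitz for $\|\cdot\|_E$ with constant $1+O(\sqrt\epsilon\,H)\le 1+CH$, $C$ independent of $\epsilon$. Consequently the one-step map $\Psi_H:=\phi^s(H)\circ\phi^f(H)$ of Integrator~\ref{1stSIM_Hamiltonian} is $(1+CH)$-Lipschitz in the energy norm.

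Next I would write the exact flow of \eqref{dynamics} (with $c=\sigma=0$) by variation of constants against the fast linear part,
\[
  \begin{bmatrix} q(t_{k+1})\\ p(t_{k+1})\end{bmatrix}
  = A(H)\begin{bmatrix} q(t_k)\\ p(t_k)\end{bmatrix}
    + \int_0^{H} A(H-s)\begin{bmatrix} 0\\ -\nabla V(q(t_k+s))\end{bmatrix}ds ,
\]
so that the one-step local error $\ell_k:=x(t_{k+1})-\Psi_H(x(t_k))$ has $q$-component $\int_0^H A_{12}(H-s)\big({-}\nabla V(q(t_k+s))\big)\,ds$ and $p$-component $\int_0^H A_{22}(H-s)\big({-}\nabla V(q(t_k+s))\big)\,ds+H\nabla V(\hat q_{k'})$, where $\hat q_{k'}=A_{11}(H)q(t_k)+A_{12}(H)p(t_k)$ is the position produced by the exact fast flow. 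Diagonalizing the fast part via $\Lambda:=\epsilon^{-1/2}(M^{-1/2}KM^{-1/2})^{1/2}$ yields $A_{11}(H)=M^{-1/2}\cos(\Lambda H)M^{1/2}$, $A_{22}(H)=M^{1/2}\cos(\Lambda H)M^{-1/2}$, $A_{12}(H)=M^{-1/2}\Lambda^{-1}\sin(\Lambda H)M^{-1/2}$, hence, with $\epsilon$-independent constants, $\|A_{12}(H)\|_2\le C\min(H,\sqrt\epsilon)$ and $\|A_{11}(H)-I\|_2,\|A_{22}(H)-I\|_2\le C\min(\epsilon^{-1}H^2,1)\le C\epsilon^{-1/2}H$.

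Using that $\nabla V$ and, by Condition~\ref{Condition2}, the exact trajectory $(q(t),p(t))$ are bounded uniformly in $\epsilon$, the $q$-component of $\ell_k$ is $O(H^2)$ uniformly. For the $p$-component I would add and subtract $\int_0^H({-}\nabla V(q(t_k+s)))\,ds$ and then $-H\nabla V(q(t_k))$: the term $\int_0^H(A_{22}(H-s)-I)({-}\nabla V)\,ds$ is $O(\epsilon^{-1/2}H^2)$ by the geometric-mean bound above; $\int_0^H({-}\nabla V(q(t_k+s)))\,ds=-H\nabla V(q(t_k))+O(H^2)$ using $\|q(t_k+s)-q(t_k)\|_2\le Cs$ and Lipschitzness; and $H\nabla V(\hat q_{k'})=H\nabla V(q(t_k))+O(\epsilon^{-1/2}H^2)$ using $\|\hat q_{k'}-q(t_k)\|_2\le C\epsilon^{-1/2}H$. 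The two $H\nabla V(q(t_k))$ terms cancel, leaving $O(\epsilon^{-1/2}H^2)$. Therefore $\|\ell_k\|_E^2\le\epsilon^{-1}\lambda_{\max}(K)\,O(H^4)+O(\epsilon^{-1}H^4)=O(\epsilon^{-1}H^4)$, i.e.\ $\|\ell_k\|_E\le C\epsilon^{-1/2}H^2$.

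Finally, $e_{k+1}=\Psi_H(x(t_k))-\Psi_H(x_k)+\ell_k$ together with the Lipschitz bound from the first step gives $\|e_{k+1}\|_E\le(1+C_1H)\|e_k\|_E+C_2\epsilon^{-1/2}H^2$; since $e_0=0$, the discrete Gronwall lemma yields $\|e_N\|_E\le(C_2/C_1)(e^{C_1T}-1)\,\epsilon^{-1/2}H=:C_T\epsilon^{-1/2}H$ with $C_T$ independent of $\epsilon$. Unpacking the norm through $\epsilon^{-1}\lambda_{\min}(K)\|q(T)-q_T\|_2^2\le\|e_N\|_E^2$ gives the uniform estimate $\|q(T)-q_T\|_2\le\lambda_{\min}(K)^{-1/2}C_T H=CH$, and through $\lambda_{\min}(M^{-1})\|p(T)-p_T\|_2^2\le\|e_N\|_E^2$ gives $\|p(T)-p_T\|_2\le\lambda_{\max}(M)^{1/2}C_T\,\epsilon^{-1/2}H$, which I would rewrite in the stated form $\epsilon^{-1/2}\|\sqrt{K}\|_2\,CH$ by absorbing the $M$-dependent factors into $C$. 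I expect the crux to be the sharp $O(\epsilon^{-1/2}H^2)$ bound on the $p$-component of the local error: it hinges both on the cancellation of the two $H\nabla V(q(t_k))$ terms and on the geometric-mean estimate $\|A_{22}(\tau)-I\|_2\le C\epsilon^{-1/2}\tau$ (the geometric mean of the trivial bound $\le2$ and the Taylor bound $\le C\epsilon^{-1}\tau^2$); a cruder estimate either carries a full factor $\epsilon^{-1}$ or is not $o(H)$ per step and would destroy uniformity. As in the Remark following Theorem~\ref{UniformConvergence}, nothing in the argument uses that the soft force is a gradient, and Condition~\ref{Condition2} is invoked only to keep the Duhamel and Lipschitz remainders $\epsilon$-independent.
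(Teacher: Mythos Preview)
Your argument is correct and follows essentially the paper's strategy: measure the error in an energy norm in which the exact fast flow $A(H)$ is an isometry, show the full one-step map is $(1+CH)$-Lipschitz in that norm with $C$ independent of $\epsilon$, bound the local error, and finish with discrete Gronwall.  Two cosmetic differences are worth flagging.  First, the paper uses the norm $\|(q,p)\|_E^2=q^Tq+\epsilon\,p^TK^{-1}p$, while you use the fast Hamiltonian $\epsilon^{-1}q^TKq+p^TM^{-1}p$; these differ only by an overall factor $\epsilon^{-1}$ (and $K,M$ dressing), which is why your global energy-norm error comes out as $C\epsilon^{-1/2}H$ whereas the paper's is $CH$---after unpacking, both give the same $O(H)$ on $q$ and $\epsilon^{-1/2}O(H)$ on $p$.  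Second, the paper organizes the local error through an auxiliary ``bridge dynamics'' (the fast system with $\nabla V$ frozen at the step's initial $q$), splitting the one-step error into (exact $\to$ bridge) $+$ (bridge $\to$ SIM) and proving the Lipschitz propagation for the bridge rather than for $\Psi_H$; you instead compute the local truncation error $\ell_k$ directly from the Duhamel representation.  In the deterministic case your direct route is a bit tidier (and your use of $\dot q=M^{-1}p$ to get $\|q(t_k+s)-q(t_k)\|_2\le Cs$ is sharper than the paper's bound via $\|x(s)-x_0\|_2$); the bridge device earns its keep mainly in the stochastic proof, where it cleanly separates the It\^o remainders from the $\nabla V$ remainders.
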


\begin{proof}
It follows by simplifying the proof of Theorem \ref{UniformConvergence}.
\end{proof}

\subsection{Stability}
As one sees from Condition \ref{Condition1} and \ref{Condition2} (as another nonlinear demonstration of Lax equivalence theorem \cite{LaRi56}), stability is necessary for global convergence. Instability could either come from the problem itself (not all SDEs have bounded solutions in the mean square sense), or from imperfection in numerical integration schemes. Here consider the latter possibility only. It is shown that impulse methods are not unconditionally stable \cite{Skeel:99}, and its improvement, mollified impulse methods, are still susceptible to instability intervals (although narrower) in a linear example \cite{CalvoSena09}. Nevertheless, instability intervals of impulse method are already narrow regions; for instance, the first instability interval in the stiff example considered by \cite{CalvoSena09} is $0.544 < H < 0.553$. It is intuitive that instability intervals for the stochastic case with damping or higher order schemes will not be wider. Therefore one could still choose a large timestep $H$ in SIMs without hitting the instability, by at most a few integration tryouts with slightly varied $H$'s.

\section{Numerical Examples}
\subsection{2-spring systems with noise and friction}
\begin{figure} [ht]
\centering
\includegraphics[width=0.40\textwidth]{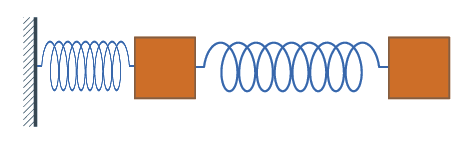}
\caption{\footnotesize 2-spring systems}
\label{example1}
\end{figure}

Consider a ``Wall -- linear stiff Spring -- Mass -- nonlinear soft Spring -- Mass'' system with both masses under isotropic noise and friction (Figure \ref{example1}). The governing equations write as:

\[ \begin{cases}
    dx &= p_x dt \\
    dy &= p_y dt \\
    dp_x &= -(\omega^2 x + (x-y)^3) dt - cp_x dt + \sigma dW^1_t \\
    dp_y &= -(y-x)^3 dt - cp_y dt + \sigma dW^2_t \\
\end{cases} \]

Note (1) this is a Langevin system with $H(x,y,p_x,p_y)=\frac{1}{2}p_x^2+\frac{1}{2}p_y^2+\frac{1}{2}\omega^2 x^2+\frac{1}{4}(y-x)^4$ (2) $y$ is a slow variable but $x$ is not purely fast (there is a slow component in it).

\begin{figure} [ht]
\centering
\subfigure[Full period case: $\sin(\omega H)=0$]{
\includegraphics[width=0.45\textwidth]{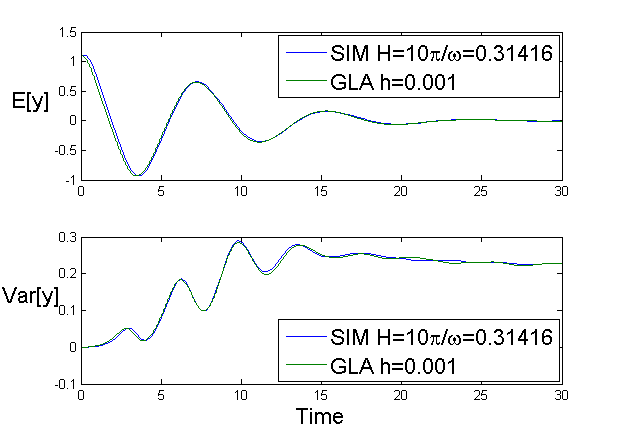}
\label{example1result1}
}
~
\subfigure[Quarter period case: $\cos(\omega H)=0$]{
\includegraphics[width=0.45\textwidth]{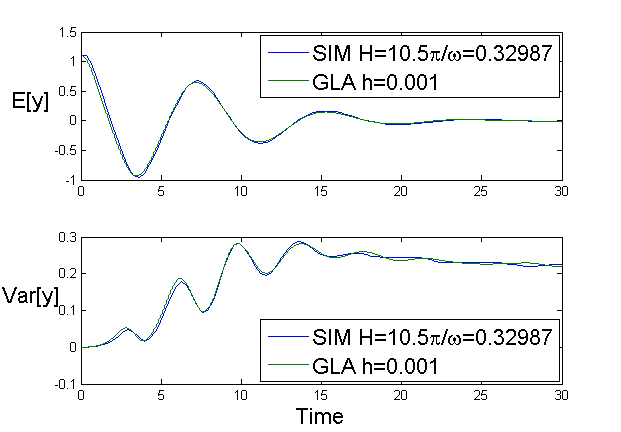}
\label{example1result2}
}
\caption{\footnotesize Empirical moments obtained by 1st-order SIM with large step $H$ and 1st-order GLA \cite{BoOw:09} with small step $h$. Parameters are $\omega=100$, $c=0.1$, $\beta=\frac{2c}{\sigma^2}=10$, $x(0)=0.8/\omega$, $y(0)=1.1+x(0)$, $p_x(0)=0$, $p_y(0)=0$; $h=0.1/\omega$ and $H$ is chosen to be not scaling with $\omega$ yet corresponding to a resonant frequency; empirical moments are obtained by averaging $5000$ simulations.}
\label{example1result}
\end{figure}

1st-order SIM (Integrator \ref{1stSIM_Langevin}) is compared in Figure \ref{example1result} to the benchmark of Geometric Langevin Integrator (GLA) \cite{BoOw:09} which is Boltzmann-Gibbs preserving and convergent. Agreements on empirical moments of integrated trajectories serve as evidences of structure preservation and convergence. The large timesteps used by SIM are chosen to be the resonance frequencies and they do produce stable accurate results. $O(\omega)$-fold acceleration is gained by SIM.

\subsection{Fermi-Pasta-Ulam problem}
\begin{figure} [h]
\includegraphics[width=\textwidth]{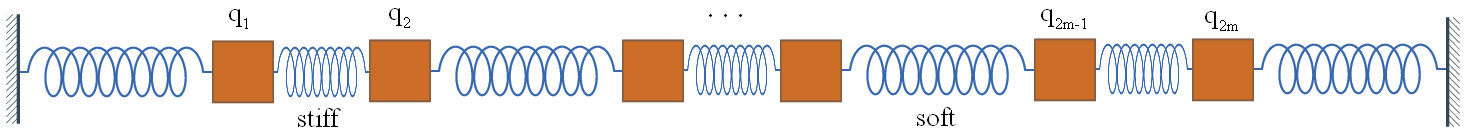}
\caption{\footnotesize Fermi-Pasta-Ulam problem \cite{FPU:55} -- 1D chain of alternatively connected harmonic stiff and non-harmonic soft springs}
\label{FPUfigure}
\end{figure}

Consider the deterministic Fermi-Pasta-Ulam (FPU) problem \cite{FPU:55} illustrated in Figure \ref{FPUfigure} and associated with the Hamiltonian
\begin{equation}
    H(q,p):=\frac{1}{2} \sum_{i=1}^m (p_{2i-1}^2+p_{2i}^2)+\frac{\omega^2}{4} \sum_{i=1}^m (q_{2i}-q_{2i-1})^2+ \sum_{i=0}^m (q_{2i+1}-q_{2i})^4
\end{equation}

Conventionally the following transformation is used
\begin{equation}
    \left\{ \begin{array}{rcl}
    x_i &=& (q_{2i}+q_{2i-1})/\sqrt{2} \\
    x_{m+i} &=& (q_{2i}-q_{2i-1})/\sqrt{2} \\
    y_i &=& (p_{2i}+p_{2i-1})/\sqrt{2} \\
    y_{m+i} &=& (p_{2i}-p_{2i-1})/\sqrt{2}
    \end{array} \right.
    i=1,...m
\end{equation}
so that the fast potential is diagonalized:
\[ \begin{cases}
    H(x,y) &= \frac{1}{2}\sum_{i=1}^{2m} y_i^2 + V_f(x) + V_s(x) \\
    V_f(x) &= \frac{\omega^2}{2}\sum_{i=1}^{m} x_{m+i}^2 \\
    V_s(x) &= \frac{1}{4}((x_1-x_{m+1})^4+\sum_{i=1}^{m-1}(x_{i+1}-x_{m+i+1}-x_i-x_{m+i})^4+(x_m+x_{2m})^4)
\end{cases} \]

\begin{figure} [ht]
\centering
\subfigure[1st-order SIM, large step $H=0.1$]{
\includegraphics[width=0.46\textwidth]{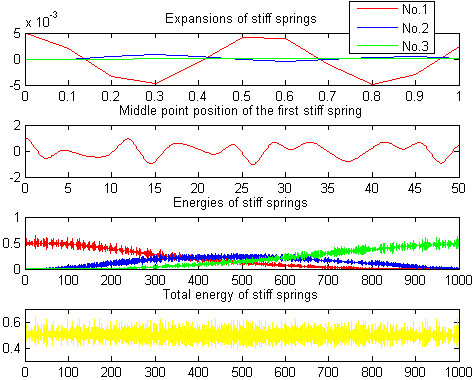}
\label{example2result1}
}
~
\subfigure[Variational Euler, small step $h=0.1/\omega=0.0005$]{
\includegraphics[width=0.46\textwidth]{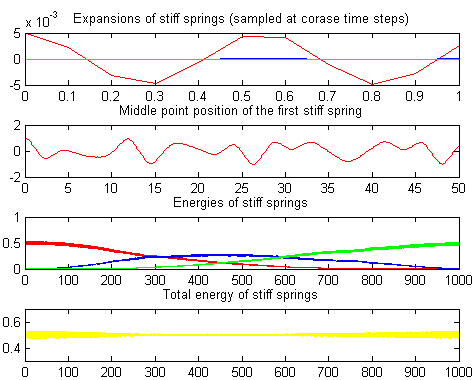}
\label{example2result2}
}
\caption{\footnotesize Simulations of FPU over $T=5\omega$. Parameters are $\omega=200$, $m=3$, $x(0)=[1,0, 0,1/\omega,0,0]$, $y(0)=[0,0,0,0,0,0]$. Different subplots use different time axes to accentuate different timescales: Subplot1 shows scaled expansions of three stiff springs $x_{m+i}$, which are fast variables; Subplot2 shows scaled middle point position of the first stiff spring $x_1$, which is one of the slow variables; Subplot3 shows the energy transferring pattern among stiff springs, which is even slower; Subplot4 shows the near-constant total energy of three stiff springs. The fast variables of stiff spring expansions are in fact oscillating much faster than shown in Subplots 1, for Subplots 1 are plotted by interpolating mesh points with a coarse mesh size of $H$.}
\label{example2result}
\end{figure}

The FPU problem is a well known benchmark problem \cite{FPUcomp07, Hairer:04} for multiscale integrators  because it exhibits different behaviors over widely separated timescales. The stiff springs (nearly) behave  like harmonic oscillator with period $\sim O(\omega^{-1})$. Then the centres of masses linked by stiff springs (i.e. the middle points of stiff springs) change over a timescale $O(1)$. The third timescale $O(\omega)$ is associated with the rate of energy exchange among stiff springs. On the other hand, in addition to conservation of energy, the total energy of stiff springs behave almost like a constant. Comprehensive surveys on FPU problem, including discussions on timescales and numerical recipes, can be found in \cite{Hairer:04, MR2275175}.

We present in Figure \ref{example2result} 1st-order SIM simulation (Integrator \ref{1stSIM_Hamiltonian}) together with variational Euler (a.k.a. symplectic Euler) simulation of FPU over a time span of $O(\omega)$. Good results are obtained by SIM beyond the timescale of $O(1)$ (as guaranteed by Theorem \ref{UniformConvergence}) but actually over $O(\omega)$, and 200-fold ($\omega=200$) acceleration is gained at the same time.

Notice that Mollified Impulse Methods with ShortAverage, LongAverage or LinearAverage filters \cite{Skeel:99} didn't accurately capture the rates of energy exchanging among stiff springs over $T=O(\omega)$ (results not shown).

\section{Acknowledgement}
This work is supported by NSF grant CMMI-092600. We thank J. M. Sanz-Serna for useful comments.

\section{Appendix}
\subsection{Proof of Theorem \ref{UniformConvergence}}

Throughout this subsection Condition \ref{Condition1} is assumed. For a concise writing we also abuse the notation $O(x^n)$, which indicates some entity whose norm $\leq Cx^n$, where $C$ is a constant that doesn't change with $\epsilon$, i.e. doesn't depend on $\epsilon^{-1}$.

\begin{Definition} \textbf{Scaled energy norm:}
    \begin{eqnarray*}
        &\Omega \triangleq \epsilon^{-1/2}\sqrt{K} \\
        &\| \begin{bmatrix} q \\ p \end{bmatrix} \|_E \triangleq \| \begin{bmatrix} q \\ \Omega^{-1}p \end{bmatrix} \|_2 = \sqrt{q^T q + \epsilon p^T K^{-1} p}
    \end{eqnarray*}
\end{Definition}
This is well defined because K is positive definite.

Since $\epsilon$ is very small, the following inequalities for converting between scaled energy norm and two-norm can be easily obtained:

\begin{Proposition}
    Let $x=\begin{bmatrix} q \\ p \end{bmatrix}$ be any vector, then
    \begin{eqnarray}
        \epsilon^{1/2} \| \sqrt{K} \|_2^{-1} \| x \|_2 = \| \Omega \|_2^{-1} \| x \|_2 \leq \| x \|_E \leq \| x \|_2 \\
        \| \begin{bmatrix} 0 \\ p \end{bmatrix} \|_E \leq \| \Omega^{-1} \|_2 \| \begin{bmatrix} 0 \\ p \end{bmatrix} \|_2 = \epsilon^{1/2} \| \sqrt{K}^{-1} \|_2 \| x \|_2
    \end{eqnarray}
    Also, vector-norm-induced matrix norms satisfy
    \begin{eqnarray}
        \| \begin{bmatrix} M_{11} & M_{12} \\ M_{21} & M_{22} \end{bmatrix} \|_E \triangleq \sup \frac{\| Mx \|_E}{\| x \|_E}
         =
        \| \begin{bmatrix} M_{11} & M_{12} \Omega \\ \Omega^{-1} M_{21} & \Omega^{-1} M_{22} \Omega \end{bmatrix} \|_2
    \end{eqnarray}
\end{Proposition}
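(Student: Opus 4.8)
The plan is to observe that the scaled energy norm is nothing but the Euclidean norm conjugated by a fixed block-diagonal scaling, and to read off all three claims from that single structural fact. First I would set $S \triangleq \mathrm{diag}(I,\Omega^{-1})$, so that $\|x\|_E = \|Sx\|_2$ for every $x=\begin{bmatrix} q \\ p\end{bmatrix}$; this is consistent with the definition since $\Omega^{-2}=\epsilon K^{-1}$ gives $\|Sx\|_2^2 = q^Tq + p^T\Omega^{-2}p = q^Tq+\epsilon p^TK^{-1}p$. Because $K$ is positive definite, $\Omega=\epsilon^{-1/2}\sqrt{K}$ is symmetric positive definite and invertible, so $S$ is invertible with $S^{-1}=\mathrm{diag}(I,\Omega)$; this invertibility is all the change-of-variable arguments below require.

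For the first chain I would bound the vector norm by the extreme singular values of $S$, using $\sigma_{\min}(S)\|x\|_2 \leq \|Sx\|_2 \leq \sigma_{\max}(S)\|x\|_2$. Since $S$ is block diagonal, its singular values are those of the identity block (all equal to $1$) together with those of $\Omega^{-1}$; as $\Omega^{-1}$ is symmetric positive definite, $\sigma_{\max}(\Omega^{-1})=\|\Omega^{-1}\|_2$ and $\sigma_{\min}(\Omega^{-1})=\|\Omega\|_2^{-1}$. Hence $\sigma_{\max}(S)=\max(1,\|\Omega^{-1}\|_2)$ and $\sigma_{\min}(S)=\min(1,\|\Omega\|_2^{-1})$. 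This is the only place requiring the smallness of $\epsilon$: because $\|\Omega^{-1}\|_2=\sqrt{\epsilon/\lambda_{\min}(K)}\to 0$ and $\|\Omega\|_2^{-1}=\sqrt{\epsilon/\lambda_{\max}(K)}\to 0$ as $\epsilon\to 0$, for $\epsilon$ small the identity block attains both extremes, so $\sigma_{\max}(S)=1$ and $\sigma_{\min}(S)=\|\Omega\|_2^{-1}$. This yields $\|\Omega\|_2^{-1}\|x\|_2 \leq \|x\|_E \leq \|x\|_2$, and the opening equality follows from $\|\Omega\|_2^{-1}=\epsilon^{1/2}\|\sqrt{K}\|_2^{-1}$, immediate from $\Omega=\epsilon^{-1/2}\sqrt{K}$.

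The second inequality is a direct submultiplicativity estimate on the momentum block. Taking $q=0$ gives $\|\begin{bmatrix} 0 \\ p\end{bmatrix}\|_E = \|\Omega^{-1}p\|_2 \leq \|\Omega^{-1}\|_2\,\|p\|_2$, and $\|\Omega^{-1}\|_2 = \epsilon^{1/2}\|\sqrt{K}^{-1}\|_2$ again by the definition of $\Omega$; the final quantity is $\|\begin{bmatrix} 0 \\ p\end{bmatrix}\|_2=\|p\|_2$ (the symbol $\|x\|_2$ appearing at the end of the stated line is read in this restricted sense).

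Finally, for the induced matrix norm I would use the same conjugation together with the substitution $y=Sx$. Since $\|Mx\|_E=\|SMx\|_2$ and $\|x\|_E=\|Sx\|_2$, invertibility of $S$ lets me write
\[ \|M\|_E=\sup_{x\neq 0}\frac{\|SMx\|_2}{\|Sx\|_2}=\sup_{y\neq 0}\frac{\|SMS^{-1}y\|_2}{\|y\|_2}=\|SMS^{-1}\|_2, \]
and a block multiplication with $S=\mathrm{diag}(I,\Omega^{-1})$ and $S^{-1}=\mathrm{diag}(I,\Omega)$ gives $SMS^{-1}=\begin{bmatrix} M_{11} & M_{12}\Omega \\ \Omega^{-1}M_{21} & \Omega^{-1}M_{22}\Omega\end{bmatrix}$, which is exactly the claimed expression. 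The whole argument is essentially mechanical once the conjugation $\|\cdot\|_E=\|S\cdot\|_2$ is recognized; the only genuine point of care — and the mild obstacle worth stating explicitly — is the bookkeeping in the first chain, namely verifying that the smallness of $\epsilon$ forces the identity block, rather than the $\Omega^{\pm 1}$ block, to realize both the largest and the smallest singular value of $S$.
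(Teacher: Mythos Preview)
Your argument is correct. The paper itself does not supply a proof of this proposition; it merely prefaces the statement with the remark that ``since $\epsilon$ is very small, the following inequalities for converting between scaled energy norm and two-norm can be easily obtained,'' and leaves the verification to the reader. Your conjugation viewpoint, writing $\|x\|_E=\|Sx\|_2$ with $S=\mathrm{diag}(I,\Omega^{-1})$ and reading off all three claims from the singular values of $S$ and the identity $\|M\|_E=\|SMS^{-1}\|_2$, is exactly the kind of routine check the paper is implicitly invoking, and your explicit flag that the smallness of $\epsilon$ is what forces $\sigma_{\max}(S)=1$ and $\sigma_{\min}(S)=\|\Omega\|_2^{-1}$ is the one point worth making precise.
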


\begin{Lemma}
    Let $B(s)=\begin{bmatrix} B_{11}(s) & B_{12}(s) \\ B_{21}(s) & B_{22}(s) \end{bmatrix}=\exp(s \begin{bmatrix} 0 & I \\ -\epsilon^{-1}K & c \end{bmatrix})$, and $Rq(s)$ be the $Rq_k(H)$ defined in Integrator \ref{1stSIM_Langevin} with $H=s$ and arbitrary $k$, then
    \begin{eqnarray}
        \| B_{11}(s) \|_2 &\leq& 1 \\
        \| B_{22}(s) \|_2 &\leq& 1 \\
        \| B_{12}(s) \|_2 &\leq& |s| \\
        \epsilon \| B_{21}(s) \|_2 &\leq& C_K|s| \\
        \epsilon^{1/2} \| B_{11}(s)-I \|_2 &\leq& C_K|s| \\
        \epsilon^{1/2} \| B_{22}(s)-I \|_2 &\leq& C_c|s| \\
        \mathbb{E} \| Rq(s) \|_2^2 &\leq& \frac{1}{3} \| \sigma \|_2^2 |s|^3 \\
        \epsilon^{1/2} \| B(s)-I \|_2 &\leq& C_{Kc}|s| \\
        \epsilon^{1/2} \| B(s)-I \|_E &\leq& C_{Kc}|s|
    \end{eqnarray}
    where $C_K$, $C_c$ and $C_{Kc}$ are some positive real constants (may indicate different values in different inequalities), respectively dependent on $K$, $\sqrt{\epsilon}c$, $K$ and $\sqrt{\epsilon}c$ but independent of $\epsilon^{-1}$.
    \label{ElementaryBound2}
\end{Lemma}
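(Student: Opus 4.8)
The plan is to diagonalize everything against the stiff matrix $K$. Since $K$ is symmetric positive definite and, by hypothesis, commutes with $c$, there is an orthogonal $Q$ simultaneously diagonalizing both: $Q^TKQ=\operatorname{diag}(k_j)$ with $k_j>0$, and $Q^TcQ=\operatorname{diag}(c_j)$ with $c_j\ge 0$ (assuming $M=I$ as in the statement of the lemma; the general $M$ case reduces to this by a further congruence). Conjugating the block matrix $\begin{bmatrix}0&I\\-\epsilon^{-1}K&c\end{bmatrix}$ by $\operatorname{diag}(Q,Q)$ block-diagonalizes $B(s)$ into $d$ scalar $2\times 2$ blocks $B^{(j)}(s)=\exp\!\left(s\begin{bmatrix}0&1\\-\epsilon^{-1}k_j&c_j\end{bmatrix}\right)$, and since orthogonal conjugation is a $2$-norm isometry, it suffices to bound each scalar block and take the worst $j$. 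For each block I would introduce the rescaled frequency $\omega_j=\sqrt{k_j/\epsilon}$ and write the entries of $B^{(j)}(s)$ explicitly via the characteristic roots $\lambda_\pm=\tfrac{1}{2}(c_j\pm\sqrt{c_j^2-4\omega_j^2})$; under Condition \ref{Condition1}(2), $c_j\le\|c\|_2\le O(\epsilon^{-1/2})$, so $\epsilon c_j^2$ is $O(1)$ and hence $c_j^2-4\omega_j^2<0$ for $\epsilon$ small, i.e. we are in the underdamped regime with $\lambda_\pm=-\tfrac{c_j}{2}\pm i\mu_j$, $\mu_j=\sqrt{\omega_j^2-c_j^2/4}$.

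With that normal form the individual bounds are essentially trigonometric estimates. In the underdamped regime the block entries are
$B^{(j)}_{11}(s)=e^{-c_js/2}(\cos\mu_js+\tfrac{c_j}{2\mu_j}\sin\mu_js)$,
$B^{(j)}_{12}(s)=e^{-c_js/2}\tfrac{1}{\mu_j}\sin\mu_js$,
$B^{(j)}_{21}(s)=-e^{-c_js/2}\tfrac{\omega_j^2}{\mu_j}\sin\mu_js$,
$B^{(j)}_{22}(s)=e^{-c_js/2}(\cos\mu_js-\tfrac{c_j}{2\mu_j}\sin\mu_js)$.
Because $\mu_j\ge\omega_j/2\to\infty$ while $c_j/\mu_j\to 0$, one reads off $\|B_{11}\|_2\le 1+o(1)$ and similarly for $B_{22}$; to get clean constants $\le 1$ one uses the Lyapunov/energy argument instead: the scaled-energy quadratic form $q^2+\epsilon k_j^{-1}p^2$ is non-increasing along the linear flow when $c_j\ge 0$, which gives $\|B^{(j)}(s)\|_E\le 1$ directly, and reading the corner entries of a matrix that is an $E$-contraction yields $\|B_{11}\|_2\le1$, $\|B_{22}\|_2\le1$, $\epsilon^{1/2}\|B_{21}\|_2\le\|\sqrt{K}\|_2$, $\|B_{12}\|_2\le|s|$ (the last from $|\sin\mu_js|\le\mu_j|s|$). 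For the "$B-I$" bounds I would use $B^{(j)}(s)-I=\int_0^s \tfrac{d}{dr}B^{(j)}(r)\,dr = \int_0^s B^{(j)}(r)\begin{bmatrix}0&1\\-\epsilon^{-1}k_j&c_j\end{bmatrix}dr$, bound the integrand in $2$-norm by $(\|B(r)\|_2)(\epsilon^{-1}k_j + c_j + 1)^{1/2}$-type factors, note $\|B(r)\|_2\le\|\Omega\|_2\|\Omega^{-1}\|_2$ is $O(\epsilon^{-1/2})$ a priori but the $\epsilon^{1/2}$ prefactor in the claimed inequalities exactly compensates one such factor, and collect: $\epsilon^{1/2}\|B_{11}(s)-I\|_2\le C_K|s|$ comes from $\epsilon^{1/2}\cdot\epsilon^{-1}k_j\cdot |s|$ after one more cancellation via the explicit $1-\cos$ and $\sin$ pieces (here $1-\cos\mu_js\le \tfrac12\mu_j^2s^2$ and $\mu_j^2=\omega_j^2+O(1)=\epsilon^{-1}k_j+O(1)$, so $\epsilon^{1/2}(1-\cos\mu_js)\le \tfrac12\epsilon^{-1/2}k_j s^2$ — this needs $|s|$ small, or more honestly the bound is $\epsilon^{1/2}\|B_{11}(s)-I\|_2\le C_K|s|$ only after using $\epsilon^{1/2}\sin\mu_js$-type terms which are $O(\epsilon^{1/2}\mu_j|s|)=O(|s|\sqrt{k_j})$; I will do the careful bookkeeping in the block form). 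The $Rq$ bound is immediate: $Rq(s)=\int_0^s B_{12}(s-r)\sigma\,dW_r$, so by Itô isometry $\mathbb{E}\|Rq(s)\|_2^2=\int_0^s\|B_{12}(s-r)\sigma\|_{F/2}^2\,dr\le\|\sigma\|_2^2\int_0^s (s-r)^2 dr=\tfrac13\|\sigma\|_2^2|s|^3$, using $\|B_{12}(\cdot)\|_2\le|\cdot|$.

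Finally, the two aggregate bounds $\epsilon^{1/2}\|B(s)-I\|_2\le C_{Kc}|s|$ and $\epsilon^{1/2}\|B(s)-I\|_E\le C_{Kc}|s|$ follow by assembling the four block estimates through the triangle inequality and the norm-conversion identity in the Proposition above: $\|B-I\|_E$ equals the $2$-norm of $\begin{bmatrix}B_{11}-I & B_{12}\Omega\\ \Omega^{-1}B_{21} & \Omega^{-1}(B_{22}-I)\Omega\end{bmatrix}$, and since $\|B_{12}\Omega\|_2\le|s|\|\Omega\|_2=\epsilon^{-1/2}\|\sqrt K\|_2|s|$ and $\|\Omega^{-1}B_{21}\|_2\le \epsilon^{1/2}\|\sqrt K^{-1}\|_2\cdot\epsilon^{-1}\|K\|_2|s|$ (and so on), every entry carries exactly one uncompensated $\epsilon^{-1/2}$, which the overall $\epsilon^{1/2}$ prefactor kills. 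The main obstacle I anticipate is purely bookkeeping rather than conceptual: tracking which powers of $\epsilon$ sit where so that each claimed prefactor ($1$, $\epsilon$, $\epsilon^{1/2}$) lands a constant that genuinely does not depend on $\epsilon^{-1}$, and making sure the underdamped-regime formulas are the relevant ones (guaranteed, for small $\epsilon$, by Condition \ref{Condition1}(2)); handling the degenerate/overdamped eigen-directions where some $c_j$ is large enough to matter would be the only place needing a separate, easy, argument.
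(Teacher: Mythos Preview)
Your approach is essentially the paper's: simultaneous diagonalization of $K$ and $c$, reduction to scalar $2\times 2$ blocks, explicit damped-oscillator formulas for the entries, It\^o isometry for $Rq$, and then assembly of the full-matrix bounds via the $E$-norm/$2$-norm conversion identity.

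One genuine gap: your claim that Condition~\ref{Condition1}(2) forces the underdamped regime is wrong. From $c_j\le\|c\|_2=O(\epsilon^{-1/2})$ you get only $\epsilon c_j^2=O(1)$, while $\epsilon\cdot 4\omega_j^2=4k_j$; the sign of $c_j^2-4\omega_j^2$ therefore depends on the \emph{constant} in $O(1)$ versus $4k_j$ and is not determined. The damping ratio $\zeta_j=c_j/\omega_j$ can perfectly well be $\ge 1$ for some modes (e.g.\ $c=\epsilon^{-1/2}\tilde c$ with $\tilde c^2>4k_j$). The paper accordingly writes out and bounds the blocks in all three regimes separately; in particular the bound $\|B_{22}(s)-I\|_2\le 2\zeta\omega s$ in the overdamped case is where the dependence on $\sqrt{\epsilon}c$ (your $C_c$) actually enters, and it is not recovered by the underdamped formulas. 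Your closing remark that the other regimes need ``a separate, easy, argument'' is correct, but the paragraph asserting they cannot occur should be removed.

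Two minor differences worth noting. First, your Lyapunov/energy-contraction shortcut ($\|B(s)\|_E\le 1$ since the damped linear flow dissipates $q^Tq+\epsilon p^TK^{-1}p$) to get $\|B_{11}\|_2,\|B_{22}\|_2\le 1$ is cleaner than the paper's ``routine calculus on local extrema'' of the explicit formulas, and it works uniformly across damping regimes. Second, for the aggregate bounds $\epsilon^{1/2}\|B-I\|_2$ and $\epsilon^{1/2}\|B-I\|_E$ the paper does not use the blockwise triangle inequality you propose; it factors out $\operatorname{diag}(\Omega,\Omega)$ and then invokes the identity $\left\|\begin{bmatrix}M_{11}&aM_{12}\\ M_{21}&M_{22}\end{bmatrix}\right\|_2=\left\|\begin{bmatrix}M_{11}&M_{12}\\ aM_{21}&M_{22}\end{bmatrix}\right\|_2$ to symmetrize the stray $\epsilon^{\pm 1/2}$ on the off-diagonal blocks before taking norms. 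Your triangle-inequality route also works but loses a constant factor. Finally, your bookkeeping around $\epsilon^{1/2}\|B_{11}(s)-I\|_2$ is tangled (the $1-\cos$ estimate gives $O(s^2)$, not $O(s)$); the clean way is $B_{11}'=B_{21}$, hence $|B_{11}(s)-1|\le\int_0^s|B_{21}|\le\omega_j|s|$, and $\epsilon^{1/2}\omega_j=\sqrt{k_j}$.
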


\begin{proof}
    Since $c$ and $K$ commute, they can be diagonalized simultaneously \cite{MatrixAnalysis}. By the theory of linear ordinary differential equations \cite{Perko}, one can hence diagonalize $B_{11}$, $B_{12}$, $B_{21}$, $B_{22}$ simultaneously. Since each diagonal element can be investigated individually, assume without loss of generality that $\Omega=[\omega]_{ij}=\epsilon^{-1/2}\sqrt{K}$ and $c$ are both scalars, and use the notation of scalar $\omega$ and scalar $c$ thereafter.

    Denote the damping ratio by $\zeta=\frac{c}{\omega}$. The solution to damped harmonic oscillator can be analytically obtained, and hence components of the flow operator $B_{11}$,$B_{12}$,$B_{21}$,$B_{22}$ as well.

    When $\zeta<1$ i.e. underdamping, which is usually the case since $\omega$ is large
    \begin{eqnarray}
        B_{11}(s)&=&e^{-\omega\zeta s} (cos(\omega\sqrt{1-\zeta^2}s)+\frac{\zeta }{\sqrt{1-\zeta^2}} sin(\sqrt{1-\zeta^2}s)) \\
        B_{12}(s)&=&\frac{e^{-\omega\zeta s}sin(\omega\sqrt{1-\zeta^2}s)}{\omega\sqrt{1-\zeta^2}}\\
        B_{21}(s)&=&-\omega\frac{e^{-\omega\zeta s}sin(\omega\sqrt{1-\zeta^2}s)}{\sqrt{1-\zeta^2}}\\
        B_{22}(s)&=&e^{-\omega\zeta s} (cos(\omega\sqrt{1-\zeta^2}s)-\frac{\zeta }{\sqrt{1-\zeta^2}} sin(\sqrt{1-\zeta^2}s))
    \end{eqnarray}

    When $\zeta=1$ i.e. critical damping,
    \begin{eqnarray}
        B_{11}(s)&=&e^{-\omega s}(1+\omega s) \\
        B_{12}(s)&=&e^{-\omega s}s \\
        B_{21}(s)&=&-\omega^2 e^{-\omega t}t \\
        B_{22}(s)&=&e^{-\omega s}(1-\omega s)
    \end{eqnarray}

    When $\zeta>1$ i.e. over damping,
    \begin{eqnarray}
        A(s)&\triangleq&e^{\omega s(-\zeta-\sqrt{\zeta^2-1})} \\
        B(s)&\triangleq&e^{\omega s(-\zeta+\sqrt{\zeta^2-1})} \\
        B_{11}(s)&=&\frac{\zeta(B-A)+\sqrt{\zeta^2-1}(A+B)}{2\sqrt{\zeta^2-1}} \\
        B_{12}(s)&=&\frac{-A+B}{2\omega\sqrt{\zeta^2-1}} \\
        B_{21}(s)&=&\frac{\omega(A-B)}{2\sqrt{\zeta^2-1}} \\
        B_{22}(s)&=&\frac{\zeta(A-B)+\sqrt{\zeta^2-1}(A+B)}{2\sqrt{\zeta^2-1}} \\
    \end{eqnarray}

    By routine investigations on local extremes using calculus, it can be shown in all three cases that

    \begin{eqnarray}
        \| B_{11}(s) \|_2 &\leq& 1 \\
        \| B_{22}(s) \|_2 &\leq& 1 \\
        \| B_{12}(s) \|_2 &\leq& s \\
        \| B_{21}(s) \|_2 &\leq& \omega^2 s \\
        \| B_{11}(s)-I \|_2 &\leq& \omega s \\
        \| B_{22}(s)-I \|_2 &\leq& \left\{ \begin {array} {ll} \omega s & \zeta \leq 1 \\ 2\zeta \omega s & \zeta > 1 \end{array} \right.
    \end{eqnarray}

    When $\zeta>1$, since $c=O(\epsilon^{-1/2})$ (Condition \ref{Condition1}), $2\zeta\omega s = O(\epsilon^{-1/2})s$. Therefore $\epsilon^{1/2} \| B_{22}(s)-I \|_2 \leq C_c|s|$ always holds.

    Also,
    \begin{eqnarray}
        \mathbb{E} \| Rq(s) \|_2^2 &=& \mathbb{E} \| \int_0^s B_{12}(t) \sigma dW_t \|_2^2 \nonumber \\
        &=& \int_0^s \| \sigma B_{12}(t) \|_2^2 dt \leq \frac{1}{3} \|\sigma\|_2^2 |s|^3
    \end{eqnarray}

    For a proof on norm bounds of the entire matrice we use only bounds of dimensionless block elements:
    \begin{eqnarray}
        \| B-I \|_2 &=& \| \begin{bmatrix} B_{11}-I & B_{12} \\ B_{21} & B_{22}-I \end{bmatrix} \|_2 \\
        &\leq& \| \begin{bmatrix} \Omega & 0 \\ 0 & \Omega \end{bmatrix} \|_2 \| \begin{bmatrix} \Omega^{-1}(B_{11}-I) & \Omega^{-1}B_{12} \\ \Omega^{-1}B_{21} & \Omega^{-1}(B_{22}-I) \end{bmatrix} \|_2 \\
        &=& \epsilon^{-1/2} \| \begin{bmatrix} O(s) & \epsilon^{1/2}O(s) \\ \epsilon^{-1/2}O(s) & O(s) \end{bmatrix} \|_2
    \end{eqnarray}

    It's easy to prove that for any scalar $a$
    \begin{equation}
        \| \begin{bmatrix} M_{11} & aM_{12} \\ M_{21} & M_{22} \end{bmatrix} \|_2 = \| \begin{bmatrix} M_{11} & M_{12} \\ aM_{21} & M_{22} \end{bmatrix} \|_2
    \end{equation}

    Therefore
    \begin{eqnarray}
        \| B-I \|_2 = \epsilon^{-1/2} \| \begin{bmatrix} O(s) & O(s) \\ O(s) & O(s) \end{bmatrix} \|_2 = \epsilon^{-1/2} O(s)
    \end{eqnarray}

    Similarly,
    \begin{eqnarray}
        \| B-I \|_E &=& \epsilon^{-1/2} \| \begin{bmatrix} B_{11}-I & B_{12}\Omega \\ \Omega^{-1}B_{21} & \Omega^{-1}B_{22}\Omega-I \end{bmatrix} \|_2 \nonumber\\
        &\leq& \epsilon^{-1/2} \| \begin{bmatrix} O(s) & \epsilon^{1/2}\epsilon^{-1/2}O(s) \\ \epsilon^{-1/2}\epsilon^{1/2}O(s) & \epsilon^{1/2}O(s)\epsilon^{-1/2} \end{bmatrix} \|_2 \nonumber\\
        &=& \epsilon^{-1/2} \| \begin{bmatrix} O(s) & O(s) \\ O(s) & O(s) \end{bmatrix} \|_2 \nonumber\\
        &=& \epsilon^{-1/2} O(s)
    \end{eqnarray}

\end{proof}

\begin{Remark}
    In the special case of $c=0$, bounds of block elements can be easily obtained since
    \begin{align*}
        |cos(\omega s)| &\leq 1 \\
        \epsilon K^{-1} |-\omega sin(\omega s)| &= | \frac{-\omega sin(\omega s)}{\omega^2} | \leq |s| \\
        \epsilon^{1/2}\sqrt{K}^{-1} | cos(\omega s)-1 | &= | -2sin^2(\omega s/2)/\omega| \leq | -2sin(\omega s/2)/\omega| \leq |s|
    \end{align*}
\end{Remark}

\begin{Lemma}
    The solution to SDE $dX=AXdt+f(X)dt+\Sigma dW_t$ can be written in the following integral form:
    \begin{equation}
        X(t)=e^{At}X(0)+\int_0^t e^{A(t-s)}f(X(s))ds+\int_0^t e^{A(t-s)} \Sigma dW_s
    \end{equation}
    \label{SDEsolution}
\end{Lemma}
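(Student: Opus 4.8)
The plan is to establish this variation-of-constants (Duhamel) representation by the standard device of passing to the ``de-stiffened'' process $Y(t) \triangleq e^{-At}X(t)$, which removes the linear drift and can then be integrated directly. The first step is to apply It\^o's formula to the product of the deterministic, matrix-valued, continuously differentiable (hence finite-variation) factor $e^{-At}$ and the vector-valued It\^o process $X(t)$. Because $e^{-At}$ has bounded variation in $t$, its quadratic covariation with $X$ vanishes, so the It\^o product rule collapses to the ordinary Leibniz form $dY = d(e^{-At})\,X + e^{-At}\,dX$ with no extra correction term. This is the one place where genuine care with the stochastic calculus is required; everything afterward is algebraic.

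Next I would substitute $d(e^{-At}) = -Ae^{-At}\,dt$ together with the given dynamics $dX = (AX + f(X))\,dt + \Sigma\,dW_t$. Since $e^{-At}$ is a convergent power series in $A$, it commutes with $A$, so $Ae^{-At} = e^{-At}A$ and the two stiff contributions $-Ae^{-At}X\,dt$ and $e^{-At}AX\,dt$ cancel identically. What survives is $dY = e^{-At}f(X)\,dt + e^{-At}\Sigma\,dW_t$, whose right-hand side no longer contains any linear $Y$-dependence, so both pieces integrate termwise. Here the It\^o integral is well defined because $e^{-As}\Sigma$ is a deterministic, square-integrable integrand on $[0,t]$, while the Lebesgue integral is well defined since $f = -\nabla V$ is bounded by Condition \ref{Condition1}.

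Integrating from $0$ to $t$ and using $Y(0) = X(0)$ yields $e^{-At}X(t) = X(0) + \int_0^t e^{-As}f(X(s))\,ds + \int_0^t e^{-As}\Sigma\,dW_s$. Left-multiplying by $e^{At}$ and moving it inside each integral --- legitimate because $e^{At}$ is deterministic and, via the commutation and semigroup identity $e^{At}e^{-As} = e^{A(t-s)}$, combines with each integrand --- produces exactly the asserted formula. As an alternative route one could instead verify the claim directly, applying It\^o's formula to the candidate right-hand side to confirm that it satisfies the SDE and the initial condition, and then invoking pathwise uniqueness, which holds because $f$ is globally Lipschitz by Condition \ref{Condition1}; I would nonetheless prefer the constructive substitution above, since it derives rather than guesses the representation. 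The only real obstacle is thus the rigorous justification of the product/integration-by-parts rule for the deterministic matrix factor against the It\^o process, which reduces to noting the vanishing covariation term.
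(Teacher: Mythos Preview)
Your proposal is correct and follows precisely the paper's own argument: define $Y(t)=e^{-At}X(t)$, apply It\^o's formula to obtain $dY=e^{-At}f(X)\,dt+e^{-At}\Sigma\,dW_t$, integrate, and left-multiply by $e^{At}$. The paper's proof is terser and omits the justifications you spell out (vanishing covariation, integrability, commutation), but the route is identical.
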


\begin{proof}
    Let $Y(t)=e^{-At}X(t)$, then by Ito's formula and $dX=AXdt+f(X)dt+\Sigma dW_t$
    \begin{equation}
        dY=e^{-At}f(X(t))dt+e^{-At}\Sigma dW_t
    \end{equation}
    This in the integral form is
    \begin{equation}
        Y(t)=Y(0)+\int_0^t e^{-As}f(X(s))ds+\int_0^t e^{-As}\Sigma dW_s
    \end{equation}
    Hence
    \begin{equation}
        X(t)=e^{At}X(0)+\int_0^t e^{A(t-s)}f(X(s))ds+\int_0^t e^{A(t-s)}\Sigma dW_s
    \end{equation}
\end{proof}

\begin{Lemma}
    Consider two continuous stochastic dynamical systems, the original dynamics and the bridge dynamics:
    \begin{equation}
        \left\{ \begin{array}{rcl}
            dq&=& p dt \\
            dp&=&-\epsilon^{-1} Kq dt- \nabla V(q) dt -cpdt+\sigma dW_t\\
            q(0)&=&q_0 \\
            p(0)&=&p_0
        \end{array} \right. \\
        \label{OriginalDynamics2}
    \end{equation}
    \begin{equation}
        \left\{ \begin{array}{rcl}
            d\tilde{q}&=&\tilde{p} dt \\
            d\tilde{p}&=&-\epsilon^{-1} K\tilde{q} dt- \nabla V(q_0) dt -c\tilde{p}dt+\sigma dW_t\\
            \tilde{q}(0)&=&q_0 \\
            \tilde{p}(0)&=&p_0
        \end{array} \right. \\
        \label{BridgeDynamics2}
    \end{equation}
    Then $( \mathbb{E} \| \begin{bmatrix} \tilde{q}(H) \\ \tilde{p}(H) \end{bmatrix} - \begin{bmatrix} q(h) \\ p(h) \end{bmatrix} \|_E^2 )^{1/2} \leq C|H|^{3/2}$, where $C$ is a positive constant independent of $\epsilon^{-1}$ but dependent on the scaleless elasticity matrix $K$, scaled damping coefficient $\sqrt{\epsilon}c$, amplitude of noise $\sigma$, and slow potential $V(\cdot)$.
    \label{lemma1}
\end{Lemma}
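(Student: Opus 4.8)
The plan is to put both SDEs in mild (integral) form using Lemma~\ref{SDEsolution}. Write $X(t)=\begin{bmatrix}q(t)\\p(t)\end{bmatrix}$ and $\tilde X(t)=\begin{bmatrix}\tilde q(t)\\\tilde p(t)\end{bmatrix}$, and take $A=\begin{bmatrix}0&I\\-\epsilon^{-1}K&-c\end{bmatrix}$ (so that $e^{As}=B(s)$ as in Lemma~\ref{ElementaryBound2}) and $\Sigma=\begin{bmatrix}0\\\sigma\end{bmatrix}$; then the original dynamics \eqref{OriginalDynamics2} has forcing $\begin{bmatrix}0\\-\nabla V(q(s))\end{bmatrix}$ and the bridge \eqref{BridgeDynamics2} the frozen forcing $\begin{bmatrix}0\\-\nabla V(q_0)\end{bmatrix}$. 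Since the two processes share the same initial value, the same $A$, and the same Brownian path, subtracting the two mild formulas cancels both $e^{AH}X(0)$ and $\int_0^H B(H-s)\Sigma\,dW_s$, leaving
\[
    \tilde X(H)-X(H)=\int_0^H B(H-s)\begin{bmatrix}0\\\nabla V(q(s))-\nabla V(q_0)\end{bmatrix}ds .
\]
From $\|B_{12}(\tau)\|_2\le|\tau|$, $\|B_{22}(\tau)\|_2\le1$ (Lemma~\ref{ElementaryBound2}) and $\|\Omega^{-1}\|_2=\epsilon^{1/2}\|\sqrt{K}^{-1}\|_2$ one reads off the pathwise estimate $\big\|B(\tau)\begin{bmatrix}0\\w\end{bmatrix}\big\|_E\le\big(|\tau|+\epsilon^{1/2}\|\sqrt{K}^{-1}\|_2\big)\|w\|_2$ for any vector $w$; combined with the triangle inequality for integrals, Minkowski's integral inequality, and the Lipschitz bound $\|\nabla V(q(s))-\nabla V(q_0)\|_2\le L\|q(s)-q_0\|_2$ of Condition~\ref{Condition1}(4), this reduces the lemma to
\[
    \Big(\mathbb{E}\big\|\tilde X(H)-X(H)\big\|_E^2\Big)^{1/2}\le L\int_0^H\big((H-s)+\epsilon^{1/2}\|\sqrt{K}^{-1}\|_2\big)\big(\mathbb{E}\|q(s)-q_0\|_2^2\big)^{1/2}ds .
\]

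The crux is then a uniform-in-$\epsilon$ one-step position-displacement estimate, $\big(\mathbb{E}\|q(s)-q_0\|_2^2\big)^{1/2}\le C\min(1,\epsilon^{-1/2}s)$ for $0\le s\le H$, with $C$ independent of $\epsilon^{-1}$. I would prove it by applying Lemma~\ref{SDEsolution} to $q$ itself,
\[
    q(s)-q_0=(B_{11}(s)-I)q_0+B_{12}(s)p_0-\int_0^s B_{12}(s-u)\nabla V(q(u))\,du+\int_0^s B_{12}(s-u)\sigma\,dW_u ,
\]
and bounding the four terms separately: the first using $\|B_{11}(s)-I\|_2\le\min(2,\epsilon^{-1/2}C_K s)$ (the constant $2$ from $\|B_{11}(s)\|_2\le1$, the other branch from $\epsilon^{1/2}\|B_{11}(s)-I\|_2\le C_K|s|$ in Lemma~\ref{ElementaryBound2}); the second using $\|B_{12}(s)\|_2\le s$; the drift integral using $\|B_{12}(s-u)\|_2\le s-u$ and boundedness of $\nabla V$ (an $O(s^2)$ term); and the Ito integral using Ito's isometry together with $\|B_{12}(s-u)\|_2\le s-u$ (an $O(s^{3/2})$ term in $L^2$). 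The moments $\mathbb{E}\|q_0\|_2^2$ and $\mathbb{E}\|p_0\|_2^2$ are $\epsilon$-independent by Condition~\ref{Condition1}(5). Since $s^{3/2},s^2,s\le\min(1,\epsilon^{-1/2}s)$ for $0\le s\le1$, each of the four contributions is $O(\min(1,\epsilon^{-1/2}s))$, which gives the displacement estimate.

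Plugging this into the reduced inequality and splitting the $s$-integral at $s=\min(H,\epsilon^{1/2})$ finishes the argument. On $\{s\le\epsilon^{1/2}\}$ one keeps the factor $\epsilon^{-1/2}s$, so the stray $\epsilon^{1/2}$ cancels and the integrand is $O\big((H-s+1)s\big)$, contributing $O(H^{3/2})$ after integration; on $\{s>\epsilon^{1/2}\}$ one uses the saturated bound $1$, together with the elementary fact that $\epsilon^{1/2}<H$ and $\epsilon<1$ force $\epsilon^{1/2}<H^{1/2}$, so the $(H-s)$ part contributes $O(H^2)$ and the $\epsilon^{1/2}\|\sqrt{K}^{-1}\|_2$ part contributes $O(\epsilon^{1/2}H)=O(H^{3/2})$. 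Collecting terms yields $\big(\mathbb{E}\|\tilde X(H)-X(H)\|_E^2\big)^{1/2}\le C|H|^{3/2}$ for $H$ in a bounded range (so that the $O(H^2)$ terms are absorbed into $O(H^{3/2})$), with $C$ depending only on $K$, $\sqrt{\epsilon}c$, $\sigma$, $L$, $\sup\|\nabla V\|$ and the initial second moments, all independent of $\epsilon^{-1}$.

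The step I expect to be the real obstacle is the displacement estimate of the second paragraph. The naive route $\|q(s)-q_0\|_2\le\int_0^s\|p(u)\|_2\,du$ produces only something of size $\epsilon^{-1/2}s$, which blows up on times $s\gtrsim\epsilon^{1/2}$ and does not survive the integration; making it quantitative uniformly in the stiffness requires \emph{both} the $\epsilon$-independent flow bounds of Lemma~\ref{ElementaryBound2} (which exploit that $K$ is scaleless and the explicit damped-harmonic-oscillator formulas) for the correct short-time growth, \emph{and} the a~priori stability bound of Condition~\ref{Condition1}(5) to saturate the estimate at the longer times $s\gtrsim\epsilon^{1/2}$. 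Everything else is bookkeeping with the scaled-norm conversion estimates and with Lemma~\ref{ElementaryBound2}.
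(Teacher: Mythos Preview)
Your proof is correct, and the overall strategy---write both processes in mild form via Lemma~\ref{SDEsolution}, subtract so that the linear and stochastic parts cancel, then control $\|\nabla V(q(s))-\nabla V(q_0)\|$ via a one-step displacement estimate for $q$---matches the paper's. The difference lies in how the $\epsilon$-bookkeeping is organised.

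The paper does not bound $B(\tau)\begin{bmatrix}0\\w\end{bmatrix}$ block-by-block. Instead it uses the single fact that the damped fast flow is nonexpansive in the energy norm, $\|B(t)y\|_E\le\|y\|_E$, and then converts $\|\begin{bmatrix}0\\w\end{bmatrix}\|_E\le\epsilon^{1/2}\|\sqrt{K}^{-1}\|_2\,\|w\|_2$. This produces a clean factor $\epsilon^{1/2}$ in front of $\|q(s)-q_0\|_2$, with no stray $|\tau|$ term. On the displacement side the paper is content with the crude (unsaturated) bound $\mathbb{E}\|q(s)-q_0\|_2^2\le\epsilon^{-1}O(s^2)+O(s)$, obtained essentially as you do from the $q$-row of the mild formula. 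Multiplying, the $\epsilon^{1/2}$ and $\epsilon^{-1/2}$ cancel and one gets $\mathbb{E}\|g(x(s))\|_E^2=O(s^2)+\epsilon\,O(s)=O(s)$; a single application of Cauchy--Schwarz in $s$ then gives $t\int_0^t O(s)\,ds=O(t^3)$, with no need to split the time interval or invoke the saturation $\min(1,\epsilon^{-1/2}s)$.

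Your route, by contrast, keeps the extra $(H-s)$ term coming from $\|B_{12}(H-s)\|_2$, and this term does \emph{not} carry an $\epsilon^{1/2}$; that is precisely why you are forced to cap the displacement at $O(1)$ for $s\gtrsim\epsilon^{1/2}$ and split the integral. The splitting argument you give is correct (in particular your claim that $\epsilon^{1/2}<H$ together with $\epsilon<1$ forces $\epsilon^{1/2}<H^{1/2}$ follows from $\epsilon<\epsilon^{1/2}<H$), but it is avoidable: had you used the contractivity $\|B(t)\cdot\|_E\le\|\cdot\|_E$ in place of the block bounds, the whole third paragraph would collapse to a two-line computation.
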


\begin{proof}
    Rewrite the original dynamics \eqref{OriginalDynamics2} as
    \begin{equation}
        \left\{ \begin{array}{rcl}
            dq&=& p dt \\
            dp&=&\epsilon^{-1} Kq dt- \nabla V(q_0) dt + (\nabla V(q_0)-\nabla V(q)) dt -cpdt+\sigma dW_t\\
            q(0)&=&q_0 \\
            p(0)&=&p_0
        \end{array} \right. \\
    \end{equation}

    Let $x(t)=\begin{bmatrix} q(t) \\ p(t) \end{bmatrix}$, $\tilde{x}(t)=\begin{bmatrix} \tilde{q}(t) \\ \tilde{p}(t) \end{bmatrix}$, $B(t)=\exp(t \begin{bmatrix} 0 & I \\ -\epsilon^{-1}K & -c \end{bmatrix})$, $b=\begin{bmatrix} 0 \\ -\nabla V(q_0) \end{bmatrix}$, $g(q,p)=g(x)=\begin{bmatrix} 0 \\ \nabla V(q_0)-\nabla V(q) \end{bmatrix}$, and $\Sigma=\begin{bmatrix} 0 \\ \sigma \end{bmatrix}$. Then by Lemma \ref{SDEsolution} solutions to the original dynamics and bridge dynamics can be respectively written as:

    \begin{eqnarray}
        x(t) &=& B(t)x(0)+\int_0^t B(t-s)bds+\int_0^t B(t-s)\Sigma dW_s+\int_0^t B(t-s)g(x(s))ds \nonumber\\
        \tilde{x}(t) &=& B(t)x(0)+\int_0^t B(t-s)bds+\int_0^t B(t-s)\Sigma dW_s
    \end{eqnarray}

    Notice for any vector $y$ and positive $t$ that $\| B(t) y \|_E \leq \| y \|_E$,  because energy is decaying in the system $\ddot{q}+c\dot{q}+\epsilon^{-1}Kq=0$. Together with Cauchy-Schwarz we have

    \begin{eqnarray}
        \mathbb{E} \| \tilde{x}(t)-x(t) \|_E^2 &=& \mathbb{E} \| \int_0^t B(t-s)g(x(s)) ds \|_E^2 \nonumber\\
        &\leq& t \int_0^t \mathbb{E} \| B(t-s)g(x(s)) \|_E^2 ds \nonumber\\
        &\leq& t \int_0^t \mathbb{E} \| g(x(s)) \|_E^2 ds
        \label{avlsjoiuqiuqo}
    \end{eqnarray}

    By Condition \ref{Condition1}, assume $\nabla V(\cdot)$ is Lipschitz continuous with coefficient $L$, then almost surely
    \begin{eqnarray}
        \| g(x(s)) \|_E &=& \| \begin{bmatrix} 0 \\ \nabla V(q_0) - \nabla V(q(s)) \end{bmatrix} \|_E \nonumber\\
        &\leq& \sqrt{\epsilon} \| \sqrt{K}^{-1} \|_2 \| \nabla V(q_0) - \nabla V(q(s)) \|_2 \nonumber\\
        &\leq& L \sqrt{\epsilon} \| \sqrt{K}^{-1} \|_2 \| q(s)-q_0 \|_2
        \label{ayufcadsujdsai}
    \end{eqnarray}

    Similarly, since
    \begin{equation}
        x(t) = B(t)x(0)+\int_0^t B(t-s) \begin{bmatrix} 0 \\ -\nabla V(q(s)) \end{bmatrix} ds+\int_0^t B(t-s)\Sigma dW_s
    \end{equation}
    we have
    \begin{equation}
        \| x(s)-B(s)x_0-\int_0^s B(s-t)\Sigma dW_t \|_2 \leq \int_0^s \| \nabla V(q(t)) \|_2 dt
        \label{oqiurhoavijdapofosd}
    \end{equation}

    By Condition \ref{Condition1}, $\nabla V(\cdot)$ is bounded, and hence the above is $O(s)$.

    We now can bound \eqref{ayufcadsujdsai} and therefore \eqref{avlsjoiuqiuqo} with the aid of \eqref{oqiurhoavijdapofosd} and Lemma \ref{ElementaryBound2}:
    \begin{eqnarray}
        &~& \mathbb{E} \| q(s)-q_0 \|_2^2 \nonumber\\
        &\leq& \mathbb{E} \| x(s)-x_0 \|_2^2 \nonumber\\
        &\leq& \mathbb{E} \left( \| x(s)-B(s)x_0-\int_0^s B(t-s)\Sigma dW_s \|_2 + \| B(s)x_0-x_0 \|_2 + \| \int_0^s B(t-s)\Sigma dW_s \|_2 \right)^2 \nonumber\\
        &\leq& 3\mathbb{E} \left( \| x(s)-B(s)x_0-\int_0^s B(t-s)\Sigma dW_s \|_2^2 + \| B(s)x_0-x_0 \|_2^2 + \| \int_0^s B(t-s)\Sigma dW_s \|_2^2 \right) \nonumber\\
        &=& 3\left(O(s^2)+\epsilon^{-1} O(s^2) \mathbb{E} \| x_0 \|_2^2 + \int_0^s \sigma^2 (B_{12}(t-s)^2+B_{22}(t-s)^2)dt\right) \nonumber\\
        &=& O(s^2)+\epsilon^{-1} O(s^2) \mathbb{E} \| x_0 \|_2^2 + O(s^3) + O(s)
    \end{eqnarray}

    By Condition \ref{Condition1}, $\mathbb{E}\| x_0 \|_2^2=O(1)$. Therefore, the above expression is $\epsilon^{-1}O(s^2)+O(s)$.

    This gives $\mathbb{E} \| g(\tilde{x}(s)) \|_E^2 = O(s)$ independent of $\epsilon^{-1}$, and eventually $\mathbb{E} \| \tilde{x}(h)-x(h) \|_E^2  = O(h^3)$.
\end{proof}

\begin{Lemma}
    Consider the discrete stochastic dynamical system given by 1st-order SIM (Integrator \ref{1stSIM_Langevin}):
    \begin{equation}
        \left\{ \begin{array}{rcl}
            q_H&=&B_{11}(H)q_0+B_{12}(H)p_0+Rq(H) \\
            p_H&=&B_{21}(H)q_0+B_{22}(H)p_0+Rp(H)-H\nabla V(B_{11}(H)q_0+B_{12}(H)p_0+Rq(H)) \\
        \end{array} \right. \\
    \end{equation}
    Then a comparison with bridge dynamics \eqref{BridgeDynamics2} gives
    $\mathbb{E} \| q_H-\tilde{q}(H) \|_2^2 \leq CH^4$ and
    $\mathbb{E} \| \Omega^{-1} (p_H-\tilde{p}(H)) \|_2^2 \leq CH^4$, and therefore
    \begin{equation}
        (\mathbb{E} \| \begin{bmatrix} q_H \\ p_H \end{bmatrix} - \begin{bmatrix} \tilde{q}(H) \\ \tilde{p}(H) \end{bmatrix} \|_E^2)^{1/2} \leq CH^2
    \end{equation}
    where $C$'s are positive constants independent of $\epsilon^{-1}$ but dependent on scaleless elasticity matrix $K$, scaled damping coefficient $\sqrt{\epsilon}c$, amplitude of noise $\sigma$, and slow potential $V(\cdot)$.
    \label{lemma2}
\end{Lemma}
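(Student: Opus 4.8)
The plan is to compare both one-step maps against the common linear backbone $B$. First I would apply Lemma~\ref{SDEsolution} to the bridge dynamics~\eqref{BridgeDynamics2}, which has constant drift $b=\begin{bmatrix}0\\-\nabla V(q_0)\end{bmatrix}$, diffusion $\Sigma=\begin{bmatrix}0\\\sigma\end{bmatrix}$ and generator matrix $A=\begin{bmatrix}0&I\\-\epsilon^{-1}K&-c\end{bmatrix}$, i.e.\ the same objects used by Integrator~\ref{1stSIM_Langevin}. This gives
\begin{align*}
\tilde q(H)&=B_{11}(H)q_0+B_{12}(H)p_0+Rq(H)-\Bigl(\int_0^H B_{12}(H-s)\,ds\Bigr)\nabla V(q_0),\\
\tilde p(H)&=B_{21}(H)q_0+B_{22}(H)p_0+Rp(H)-\Bigl(\int_0^H B_{22}(H-s)\,ds\Bigr)\nabla V(q_0),
\end{align*}
where $Rq(H)=\int_0^H B_{12}(H-s)\sigma\,dW_s$ and $Rp(H)=\int_0^H B_{22}(H-s)\sigma\,dW_s$ are exactly the Gaussian increments in Integrator~\ref{1stSIM_Langevin}, driven by the same Brownian path. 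Subtracting the one-step formulas in the statement, the linear and the stochastic terms cancel identically, leaving only the discretization error of the slow drift:
\begin{align*}
q_H-\tilde q(H)&=\Bigl(\int_0^H B_{12}(H-s)\,ds\Bigr)\nabla V(q_0),\\
p_H-\tilde p(H)&=-H\nabla V(q_H)+\Bigl(\int_0^H B_{22}(H-s)\,ds\Bigr)\nabla V(q_0).
\end{align*}

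For the position, $\|B_{12}(s)\|_2\le|s|$ from Lemma~\ref{ElementaryBound2} gives $\|\int_0^H B_{12}(H-s)\,ds\|_2\le H^2/2$, so boundedness of $\nabla V$ (Condition~\ref{Condition1}) yields $\|q_H-\tilde q(H)\|_2\le CH^2$ almost surely, hence $\mathbb{E}\|q_H-\tilde q(H)\|_2^2\le CH^4$. The momentum is the delicate part: the naive estimate is only $O(H)$, so I would instead write $\int_0^H B_{22}(H-s)\,ds=HI+\int_0^H\bigl(B_{22}(H-s)-I\bigr)\,ds$ so that the $O(H)$ contributions cancel,
\[
p_H-\tilde p(H)=H\bigl(\nabla V(q_0)-\nabla V(q_H)\bigr)+\Bigl(\int_0^H\bigl(B_{22}(H-s)-I\bigr)\,ds\Bigr)\nabla V(q_0).
\]
For the first term I would use Lipschitz continuity of $\nabla V$ together with the one-step displacement estimate $\mathbb{E}\|q_H-q_0\|_2^2=\epsilon^{-1}O(H^2)$, which follows from $\|B_{11}(H)-I\|_2=\epsilon^{-1/2}O(H)$, $\|B_{12}(H)\|_2\le H$, $\mathbb{E}\|Rq(H)\|_2^2\le\tfrac13\|\sigma\|_2^2H^3$ of Lemma~\ref{ElementaryBound2} and $\mathbb{E}\|x_0\|_2^2=O(1)$ of Condition~\ref{Condition1}; multiplying by $\Omega^{-1}$, whose norm is $\epsilon^{1/2}\|\sqrt{K}^{-1}\|_2$, supplies the factor $\epsilon$ that exactly absorbs this $\epsilon^{-1}$, so $\mathbb{E}\|\Omega^{-1}H(\nabla V(q_0)-\nabla V(q_H))\|_2^2=O(H^4)$. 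For the second term, since $K$ and $c$ commute $\Omega^{-1}$ commutes with $B_{22}$, and $\|\Omega^{-1}(B_{22}(s)-I)\|_2=O(|s|)$ uniformly in $\epsilon$ (using $c=O(\epsilon^{-1/2})$ to cover the overdamped case, as in Lemma~\ref{ElementaryBound2}), so $\|\Omega^{-1}\int_0^H(B_{22}(H-s)-I)\,ds\|_2=O(H^2)$, and with $\nabla V$ bounded its mean square is $O(H^4)$. Adding, $\mathbb{E}\|\Omega^{-1}(p_H-\tilde p(H))\|_2^2\le CH^4$.

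Finally, since $\|\begin{bmatrix}q\\p\end{bmatrix}\|_E^2=\|q\|_2^2+\|\Omega^{-1}p\|_2^2$, the two mean-square bounds combine to $\mathbb{E}\|\begin{bmatrix}q_H\\p_H\end{bmatrix}-\begin{bmatrix}\tilde q(H)\\\tilde p(H)\end{bmatrix}\|_E^2\le CH^4$, i.e.\ the asserted $CH^2$ bound on the root-mean-square energy-norm error. The main obstacle is precisely the momentum estimate: one must first recognize that the $O(H)$ pieces of $-H\nabla V(q_H)$ and $\int_0^H B_{22}(H-s)\,ds\,\nabla V(q_0)$ cancel, and then spend the $\epsilon^{1/2}$ gained from the $\Omega^{-1}$ rescaling to control the residual $\epsilon^{-1/2}$-growth of both $\|q_H-q_0\|_2$ and $\|B_{22}-I\|_2$ — the same mechanism responsible for the momentum error being uniform only after passing to the scaled energy norm.
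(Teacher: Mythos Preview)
Your proposal is correct and follows essentially the same route as the paper: write the bridge solution via Lemma~\ref{SDEsolution}, cancel the linear and stochastic parts, bound the position error directly from $\|B_{12}(s)\|_2\le|s|$, and for the momentum add and subtract $HI$ so that the $O(H)$ pieces cancel, then control the two residuals using $\|\Omega^{-1}(B_{22}(s)-I)\|_2=O(s)$ and the Lipschitz estimate combined with $\mathbb{E}\|q_H-q_0\|_2^2=\epsilon^{-1}O(H^2)$ absorbed by $\|\Omega^{-1}\|_2^2=O(\epsilon)$. The only cosmetic difference is that you pull the scalar bound $\|\Omega^{-1}\|_2$ out before applying Lipschitz, whereas the paper keeps $\Omega^{-1}$ inside and splits the displacement term by term; the mechanism and the resulting $O(H^4)$ bounds are identical.
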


\begin{proof}
    The exact solution to the bridge dynamics is
    \begin{equation}
        \left\{ \begin{array}{rcl}
            \tilde{q}(H)&=&B_{11}(H)q_0+B_{12}(H)p_0+\int_0^H B_{12}(s)(-\nabla V(q_0))ds+Rq(H) \\
            \tilde{p}(H)&=&B_{21}(H)q_0+B_{22}(H)p_0+\int_0^H B_{22}(s)(-\nabla V(q_0))ds+Rp(H) \\
        \end{array} \right. \\
    \end{equation}

    Hence almost surely $\tilde{q}(H)-q_H=\int_0^H B_{12}(s)(-\nabla V(q_0)) ds$.

    Since $B_{12}(s)=O(s)$ by Lemma \ref{ElementaryBound2}, and $\mathbb{E}\| -\nabla V(q_0) \|_2^2$ is bounded by Condition \eqref{Condition1}, one gets
    \begin{eqnarray}
        \mathbb{E} \| \tilde{q}(H)-q_H \|_2^2 &\leq& H \int_0^H \mathbb{E} \| B_{12}(s)(-\nabla V(q_0)) \|_2^2 ds \nonumber\\
        &\leq& H \int_0^H \mathbb{E} (\| B_{12}(s) \|_2 \| -\nabla V(q_0) \|_2)^2 ds \nonumber\\
        &=& H \int_0^H O(s^2) \mathbb{E}\| -\nabla V(q_0) \|_2^2 ds \nonumber\\
        &=& O(H^4)
    \end{eqnarray}

    Investigation on $p$ by applying Lemma \ref{ElementaryBound2} and Condition \ref{Condition1} gives:
    \begin{eqnarray}
        &~& \mathbb{E} \| \Omega^{-1} (\tilde{p}(H)-p_H) \|_2^2 \nonumber\\
        &=& \mathbb{E} \| \Omega^{-1} (\int_{0}^{H} B_{22}(s)ds(-\nabla V(q_0))+H\nabla V(B_{11}(H)q_0+B_{12}(H)p_0+Rq(H))) \|_2^2 \nonumber \\
        &=& \mathbb{E} \| \int_{0}^{H} \Omega^{-1} (B_{22}(s)-I)ds(-\nabla V(q_0))+H \Omega^{-1} (\nabla V(B_{11}(H)q_0+B_{12}(H)p_0 \nonumber\\
        &~& +Rq(H))-\nabla V(q_0) ) \|_2^2 \nonumber \\
        &\leq& 2 \mathbb{E} [\| \int_{0}^{H} \Omega^{-1} (B_{22}(s)-I)ds(-\nabla V(q_0)) \|_2^2 + \| H \Omega^{-1} (\nabla V(B_{11}(H)q_0+B_{12}(H)p_0 \nonumber\\
        &~& +Rq(H))-\nabla V(q_0) ) \|_2^2] \nonumber\\
        &\leq& 2 [H \int_{0}^{H} \mathbb{E} \| \Omega^{-1} (B_{22}(s)-I) (-\nabla V(q_0)) \|_2^2 ds + \mathbb{E} \| H \Omega^{-1} (\nabla V(B_{11}(H)q_0 \nonumber\\
        &~& +B_{12}(H)p_0+Rq(H))-\nabla V(q_0) ) \|_2^2] \nonumber\\
        &\leq& 2H[ \int_{0}^{H} \| \Omega^{-1} (B_{22}(s)-I) \|_2^2 ds \mathbb{E} \| (-\nabla V(q_0)) \|_2^2 + H \mathbb{E} \| \Omega^{-1} (\nabla V(B_{11}(H)q_0 \nonumber\\
        &~& +B_{12}(H)p_0+Rq(H))-\nabla V(q_0)) \|_2] \nonumber\\
        &\leq& 2H[O(H^3) + L^2 H \mathbb{E} \| \Omega^{-1} (B_{11}(H)q_0+B_{12}(H)p_0+Rq(H)-q_0) \|_2^2] \nonumber\\
        &\leq& 2H[O(H^3) + 3L^2 H (\mathbb{E} \| \Omega^{-1} (B_{11}(H)-I) q_0 \|_2^2 + \mathbb \| \Omega^{-1} B_{12}(H) p_0 \|_2^2 \nonumber\\
        &~& + \mathbb{E} \| \Omega^{-1} Rq(H) \|_2^2)] \nonumber\\
        &\leq& 2H[O(H^3) + 3L^2 H (\| \Omega^{-1} (B_{11}(H)-I) \|_2^2 \mathbb{E} \| q_0 \|_2^2 + \| B_{12}(H) \|_2^2 \mathbb{E} \| p_0 \|_2^2 \nonumber\\
        &~& + \mathbb{E} \| Rq(H) \|_2^2)] \nonumber\\
        &\leq& 2H[O(H^3) + 3L^2 H(O(H)^2 \mathbb{E} \| q_0 \|_2^2 + O(H)^2 \mathbb{E} \| p_0 \|_2^2 + O(H^3))] \nonumber\\
        &=& O(H^4)
    \end{eqnarray}

    Therefore $\mathbb{E} \| \begin{bmatrix} q_H \\ p_H \end{bmatrix} - \begin{bmatrix} \tilde{q}(H) \\ \tilde{p}(H) \end{bmatrix} \|_E^2 = O(H^4)$ independent of $\epsilon^{-1}$.
\end{proof}

\begin{Lemma}
    Consider evolutions of different local initial conditions under the bridge dynamics:
    \begin{equation}
        \left\{ \begin{array}{rcl}
            d\tilde{q}_1&=&\tilde{p}_1 dt\\
            d\tilde{p}_1&=&-\epsilon^{-1} K\tilde{q}_1 dt- \nabla V(\tilde{q}_1(0)) dt -c \tilde{p}_1 dt+\sigma dW_t\\
        \end{array} \right. \\
    \end{equation}
    \begin{equation}
        \left\{ \begin{array}{rcl}
            d\tilde{q}_2&=&\tilde{p}_2 dt\\
            d\tilde{p}_2&=&-\epsilon^{-1} K\tilde{q}_2 dt- \nabla V(\tilde{q}_2(0)) dt -c \tilde{p}_2 dt+\sigma dW_t\\
        \end{array} \right. \\
    \end{equation}
    Denote by $L$ the Lipschitz coefficient of $\nabla V(\cdot)$ (i.e. $\| \nabla V(a)-\nabla V(b) \|_2 \leq L \| a-b \|_2$), then almost surely
    \begin{equation}
        \| \begin{bmatrix} \tilde{q}_1(H)-\tilde{q}_2(H) \\ \tilde{p}_1(H)-\tilde{p}_2(H) \end{bmatrix} \|_E
        \leq (1+HL) \| \begin{bmatrix} \tilde{q}_1(0)-\tilde{q}_2(0) \\ \tilde{p}_1(0)-\tilde{p}_2(0) \end{bmatrix} \|_E
    \end{equation}
    \label{lemma3}
\end{Lemma}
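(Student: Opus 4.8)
The plan is to exploit the fact that the two bridge dynamics are driven by \emph{additive} noise that does not depend on the state, and by a drift whose only nonlinear ingredient, $\nabla V(\tilde q_i(0))$, is frozen at the initial time. Hence, along each sample path, the difference process $\delta x(t):=\bigl(\tilde q_1(t)-\tilde q_2(t),\,\tilde p_1(t)-\tilde p_2(t)\bigr)$ solves the \emph{deterministic} linear inhomogeneous ODE $\dot{\delta x}=A_B\,\delta x+(b_1-b_2)$, where $A_B=\left[\begin{smallmatrix} 0 & I \\ -\epsilon^{-1}K & -c\end{smallmatrix}\right]$ and $b_i=\bigl(0,\,-\nabla V(\tilde q_i(0))\bigr)$ is a constant vector; the stochastic forcings cancel in the subtraction and the whole comparison becomes path-by-path deterministic.

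First I would apply the variation-of-constants formula (Lemma \ref{SDEsolution}) to each $\tilde x_i$ and subtract: since the It\^o integrals cancel, $\delta x(H)=B(H)\,\delta x(0)+\int_0^H B(H-s)\,(b_1-b_2)\,ds$, with $B$ the propagator of Lemma \ref{ElementaryBound2}. Then I would measure everything in the scaled energy norm and combine the triangle inequality with the contraction property $\|B(t)y\|_E\le\|y\|_E$ for $t\ge 0$ --- already recorded inside the proof of Lemma \ref{lemma1}, coming from the energy dissipation of $\ddot q+c\dot q+\epsilon^{-1}Kq=0$ --- to obtain $\|\delta x(H)\|_E\le\|\delta x(0)\|_E+H\,\|b_1-b_2\|_E$.

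It then remains to show $\|b_1-b_2\|_E\le L\,\|\delta x(0)\|_E$, and the only care needed is to pass between $\|\cdot\|_2$ and $\|\cdot\|_E$ in the right directions so that no spurious power of $\epsilon$ survives. I would chain: $\|b_1-b_2\|_E\le\|b_1-b_2\|_2$ (Proposition); $\|b_1-b_2\|_2=\|\nabla V(\tilde q_1(0))-\nabla V(\tilde q_2(0))\|_2\le L\,\|\tilde q_1(0)-\tilde q_2(0)\|_2$ (Lipschitz continuity, Condition \ref{Condition1}(4)); and $\|\tilde q_1(0)-\tilde q_2(0)\|_2=\bigl\|\bigl(\tilde q_1(0)-\tilde q_2(0),\,0\bigr)\bigr\|_E\le\|\delta x(0)\|_E$, the last step because suppressing the momentum block only deletes the nonnegative term $\epsilon\,\delta p(0)^T K^{-1}\delta p(0)$ from the energy norm. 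Substituting gives $\|\delta x(H)\|_E\le(1+HL)\,\|\delta x(0)\|_E$, and since every manipulation holds for each fixed $\omega$ the estimate is almost sure.

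I do not expect a genuine obstacle: the essential point is that the comparison collapses to a deterministic linear ODE once the additive noise cancels, after which a one-step Gr\"onwall-type bound finishes it. The only delicate spot is the final norm bookkeeping --- using the ordinary $2$-norm for the Lipschitz estimate (which sees positions only) but the scaled energy norm everywhere else --- so that the amplification factor comes out exactly $1+HL$ with a constant independent of $\epsilon^{-1}$; this is precisely the form needed to later accumulate the one-step errors of Lemmas \ref{lemma1} and \ref{lemma2} over the $N=T/H$ steps.
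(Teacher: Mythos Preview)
Your proposal is correct and follows essentially the same route as the paper: write each bridge solution via variation of constants, subtract so the It\^o integrals cancel, use the energy-norm contractivity $\|B(t)y\|_E\le\|y\|_E$ on both the homogeneous part and the integrand, and then bound $\|b_1-b_2\|_E$ by $L\|\delta x(0)\|_E$ via the Lipschitz condition. The only cosmetic difference is the norm bookkeeping in that last step---the paper stays in the energy norm and swaps the $q$- and $p$-slots, whereas you detour through the $2$-norm using $\|x\|_E\le\|x\|_2$; both chains yield the same constant $1+HL$.
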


\begin{proof}
    Write out the solution to the bridge dynamics in integral form:
    \begin{eqnarray}
        \begin{bmatrix} \tilde{q}_1(H) \\ \tilde{p}_1(H) \end{bmatrix} &=& B(H) \begin{bmatrix} \tilde{q}_1(0) \\ \tilde{p}_1(0) \end{bmatrix} +\int_0^H B(H-s) \begin{bmatrix} 0 \\ -\nabla V(\tilde{q}_1(0)) \end{bmatrix} ds + \int_0^H B(H-s) \Sigma dW_s \nonumber\\
        \begin{bmatrix} \tilde{q}_2(H) \\ \tilde{p}_2(H) \end{bmatrix} &=& B(H) \begin{bmatrix} \tilde{q}_2(0) \\ \tilde{p}_2(0) \end{bmatrix} +\int_0^H B(H-s) \begin{bmatrix} 0 \\ -\nabla V(\tilde{q}_2(0)) \end{bmatrix} ds + \int_0^H B(H-s) \Sigma dW_s \nonumber\\
    \end{eqnarray}

    Hence almost surely
    \begin{eqnarray}
        &~& \| \begin{bmatrix} \tilde{q}_1(H)-\tilde{q}_2(H) \\ \tilde{p}_1(H)-\tilde{p}_2(H) \end{bmatrix} \|_E \nonumber\\
        &\leq& \| B(H) \begin{bmatrix} \tilde{q}_1(0)-\tilde{q}_2(0) \\ \tilde{p}_1(0)-\tilde{p}_2(0) \end{bmatrix} \|_E +\int_0^H \| B(H-s) \begin{bmatrix} 0 \\ \nabla V(\tilde{q}_2(0))-\nabla V(\tilde{q}_1(0)) \end{bmatrix} \|_E ds \nonumber\\
        &\leq& \| \begin{bmatrix} \tilde{q}_1(0)-\tilde{q}_2(0) \\ \tilde{p}_1(0)-\tilde{p}_2(0) \end{bmatrix} \|_E +H \| \begin{bmatrix} 0 \\ \nabla V(\tilde{q}_2(0))-\nabla V(\tilde{q}_1(0)) \end{bmatrix} \|_E \nonumber\\
        &\leq& \| \begin{bmatrix} \tilde{q}_1(0)-\tilde{q}_2(0) \\ \tilde{p}_1(0)-\tilde{p}_2(0) \end{bmatrix} \|_E + HL \| \begin{bmatrix} 0 \\ \tilde{q}_2(0)-\tilde{q}_1(0) \end{bmatrix} \|_E \nonumber\\
        &\leq& \| \begin{bmatrix} \tilde{q}_1(0)-\tilde{q}_2(0) \\ \tilde{p}_1(0)-\tilde{p}_2(0) \end{bmatrix} \|_E + HL \| \begin{bmatrix} \tilde{q}_2(0)-\tilde{q}_1(0) \\ 0 \end{bmatrix} \|_E \nonumber\\
        &\leq& (1+HL) \| \begin{bmatrix} \tilde{q}_1(0)-\tilde{q}_2(0) \\ \tilde{p}_1(0)-\tilde{p}_2(0) \end{bmatrix} \|_E
    \end{eqnarray}
\end{proof}

\begin{Remark}
    If the traditional method of investigating the Lipschitz coefficient of the vector field is employed to evolve the separation of local initial conditions, $\epsilon^{-1}$ will exhibit in the bound of separation. Instead we only looked at the soft part of the vector field and whence obtained a uniform bound.
\end{Remark}

\noindent \textbf{Theorem \ref{UniformConvergence} (global error bound in energy norm).}
\begin{proof} ~

    \xymatrix{
    x(NH)
    \ar@{~}[rr]^{O(H^{3/2})}_{\text{root mean square}}
    &&
    \tilde\alpha
    \ar@{~}[rrr]^{e_{N-1}(1+HL)}_{\text{almost surely}}
    &&&
    \tilde\beta
    \ar@{~}[rr]^{O(H^2)}_{\text{root mean square}}
    &&
    x_{NH}
    \\
    \\
    \\
    &&
    x((N-1)H)
    \ar@{~}[rrr]_{e_{N-1}}
    \ar[uuull]^{\text{original dynamics}}
    \ar[uuu]^{\text{bridge}}_{\text{dynamics}}
    &&&
    x_{(N-1)H}
    \ar[uuu]^{\text{bridge}}_{\text{dynamics}}
    \ar[uuurr]_{\text{1st order SIM}}
    }

    \begin{center} \textbf{Atlas of error propagation} \end{center}
    \medskip

    Let $e_N = (\mathbb{E} \| x(NH)-x_{NH} \|_E^2)^{1/2}$. Let $\tilde{\alpha}$ and $\tilde{\beta}$ be respectively the evolution of the real solution $x((N-1)H)$ and the numerical solution $x_{(N-1)H}$ by time $H$ under the bridge dynamics \eqref{BridgeDynamics2}.

    Then by Lemma \ref{lemma1} and \ref{lemma2}, there exist constants $C_1$ and $C_2$ independent of $\epsilon^{-1}$ such that

    \begin{eqnarray}
        (\mathbb{E} \| X(NH)-\tilde{\alpha} \|_E^2)^{1/2} &\leq& C_1 H^{3/2} \nonumber\\
        (\mathbb{E} \| \tilde{\beta}-X_{Nh} \|_E^2)^{1/2} &\leq& C_2 H^2
    \end{eqnarray}

    Also since $\| \tilde{\alpha}-\tilde{\beta} \|_E \leq (1+HL) \| x((N-1)h)-x_{(N-1)h} \|_E \text{ almost surely}$ (Lemma \ref{lemma3}), we have:
    \begin{equation}
        (\mathbb{E} \| \tilde{\alpha}-\tilde{\beta} \|_E^2)^{1/2} \leq (1+HL) e_{N-1}
    \end{equation}

    All in all,
    \begin{eqnarray}
        e_N &\leq& (\mathbb{E} \| x(NH)-\tilde{\alpha} \|_E^2)^{1/2} + (\mathbb{E} \| \tilde{\alpha}-\tilde{\beta} \|_E^2)^{1/2} + (\mathbb{E} \| \tilde{\beta}-X_{NH}) \|_E^2)^{1/2} \nonumber\\
        &\leq& (1+HL) e_{N-1} + (C_1+C_2)H^{3/2} \nonumber\\
        &=& (1+HL)^N e_0 + (C_1+C_2)H^{3/2} \frac{(1+HL)^N-1}{(1+HL)-1} \nonumber\\
        &\leq& (C_1+C_2)H^{1/2} \frac{e^{NHL}-1}{L} = \frac{(C_1+C_2)(e^{TL}-1)}{L}H^{1/2}
    \end{eqnarray}

    Therefore letting $C=\frac{(C_1+C_2)(e^{TL}-1)}{L}$ we have
    \begin{align}
        (\mathbb{E} \| q(T)-q_T \|_2^2)^{1/2} &\leq e_N \leq C H^{1/2} \\
        (\mathbb{E} \| p(T)-p_T \|_2^2)^{1/2} &\leq \epsilon^{-1/2} \| \sqrt{K} \|_2 e_N \leq \epsilon^{-1/2} \| \sqrt{K} \|_2 C H^{1/2}
    \end{align}
\end{proof}

\bibliographystyle{siam}
\bibliography{molei14}

\end{document}